%%%%%%%%%%%%%%%%%%%%%%% file template.tex %%%%%%%%%%%%%%%%%%%%%%%%%
%
% This is a general template file for the LaTeX package SVJour3
% for Springer journals.          Springer Heidelberg 2010/09/16
%
% Copy it to a new file with a new name and use it as the basis
% for your article. Delete % signs as needed.
%
% This template includes a few options for different layouts and
% content for various journals. Please consult a previous issue of
% your journal as needed.
%
%%%%%%%%%%%%%%%%%%%%%%%%%%%%%%%%%%%%%%%%%%%%%%%%%%%%%%%%%%%%%%%%%%%
%
% First comes an example EPS file -- just ignore it and
% proceed on the \documentclass line
% your LaTeX will extract the file if required
% [arxiv_v2: filecontents example.eps stripped, 188 chars]
\RequirePackage{fix-cm}
\documentclass[smallextended]{svjour3}       % onecolumn (second format)
\smartqed  % flush right qed marks, e.g. at end of proof
\usepackage{graphicx}
%
%%%%%%%%%%%%%%%%%%%%%%%
\usepackage{amsmath}
\usepackage{amssymb}
\usepackage{amsfonts}
\usepackage{mathrsfs}

\newcommand{\R}{\mathbb{R}}

\newcommand{\A}{\mathcal{A}}

\newtheorem{thm}{Theorem}[section]
\newtheorem{prop}[thm]{Proposition}
\newtheorem{lem}[thm]{Lemma}
\newtheorem{rem}[thm]{Remark}
\newtheorem{cor}[thm]{Corollary}

\newtheorem{defn}[thm]{Definition}

\topmargin -10mm \textwidth 133mm \textheight 240mm
\itemindent 10pt \oddsidemargin 0cm \evensidemargin 0cm
%%%%%%%%%%%%%%%%%%%%%%5
% \usepackage{mathptmx}      % use Times fonts if available on your TeX system
%
% insert here the call for the packages your document requires
%\usepackage{latexsym}
% etc.
%
% please place your own definitions here and don't use \def but
% \newcommand{}{}
%
% Insert the name of "your journal" with
% \journalname{myjournal}
%
\begin{document}

\title{A variant of H\"ormander's $L^2$ theorem for Dirac operator in Clifford analysis%\thanks{Grants or other notes
%about the article that should go on the front page should be
%placed here. General acknowledgments should be placed at the end of the article.}
}
%\subtitle{Do you have a subtitle?\\ If so, write it here}

%\titlerunning{Short form of title}        % if too long for running head

\author{Yang Liu          \and
        Zhihua Chen \and Yifei Pan %etc.
}

%\authorrunning{Short form of author list} % if too long for running head

\institute{Yang Liu \at
              Department of Mathematics, Zhejiang Normal
                University, Jinhua 321004, China \\
              %Tel.: +123-45-678910\\
              Fax: +86-57982298897\\
              \email{liuyang4740@gmail.com}           %  \\
%             \emph{Present address:} of F. Author  %  if needed
           \and
           Zhihua Chen \at
              Department of Mathematics, Tongji University, Shanghai 200092, China
           \and
           Yifei Pan \at
              Department of Mathematical Sciences, Indiana University-Purdue University Fort Wayne, Fort Wayne, Indiana 46805, USA
}

\date{Received: date / Accepted: date}
% The correct dates will be entered by the editor

\maketitle

\begin{abstract}
In this paper, we give the H\"ormander's $L^2$ theorem for Dirac operator over an open subset $\Omega\in\R^{n+1}$ with Clifford algebra. Some sufficient condition on the existence of the weak solutions for Dirac operator has been found in the sense of Clifford analysis. In particular, if $\Omega$ is bounded, then we prove that for any $f$ in $L^2$ space with value in Clifford algebra, there exists a weak solution of Dirac operator such that $$\overline{D}u=f$$ with $u$ in the $L^2$ space as well. The method is based on H\"ormander's $L^2$ existence theorem in complex analysis and the $L^2$ weighted space is utilised.
\keywords{H\"ormander's $L^2$ theorem\and Clifford analysis \and weak solution\and Dirac operator}
% \PACS{PACS code1 \and PACS code2 \and more}
\subclass{32W50 \and 15A66}
\end{abstract}

\section{Introduction}
The  development  of  function  theories  on  Clifford  algebras  has  proved  a
useful  setting  for  generalizing  many
aspects  of  one  variable  complex  function  theory to  higher  dimensions.
The  study  of  these  function  theories  is  referred  to  as  Clifford  analysis  \cite{c,c2,qt,J5}, which  is  closely  related  to  a  number  of  studies  made  in
mathematical  physics,  and  many  applications  in  this  area
have  been  found  in  recent  years. In \cite{J4}, Ryan considered solutions of the polynomial Dirac operator, which afforded an integral representation. Furthermore, the author gave a Pompeiu representation for $C^1$-functions in a Lipschitz bounded domain. In \cite{J2}, the author presented a classification of linear, conformally invariant, Clifford-algebra-valued differential operators over $\mathbb{C}^n$, which comprised the Dirac operator and its iterates. In \cite{J6}, Qian and Ryan used Vahlen matrices to study the conformal covariance of various types of Hardy spaces over hypersurfaces in $\mathbb{R}^n$. In \cite{mz}, the discrete Fueter polynomials was introduced, which formed a basis of the space of discrete spherical monogenics. Moreover, the explicit construction for this discrete Fueter basis, in arbitrary dimension $m$ and for arbitrary homogeneity degree $k$ was presented as well.

In \cite{H}, the famous H\"ormander's $L^2$ existence and approximation
theorems was given for the $\bar{\partial}$ operator in pseudo-convex domains in $\mathbb{C}^n$. When $n=1$, the existence theorem of complex variable can be deduced. The aim of this paper is to establish a H\"ormander's $L^2$ theorem in $\R^{n+1}$ with Clifford analysis, and present sufficient condition on the existence of the weak solutions for Dirac operator in the sense of Clifford algebra.

Let $\A$ be a real Clifford algebra over an (n+1)-dimensional real vector space $\R^{n+1}$ and the corresponding norm on $\A$ is given by $|\lambda|_0=\sqrt{(\lambda,\lambda)_0}$ (see subsection \ref{sub1}). Let $\Omega$ be an open subset of $\R^{n+1}$, $L^2(\Omega,\A,\varphi)$ be a right Hilbert $\A$-module for a given function $\varphi\in C^2(\Omega, \R)$ with the norm given by Definition \ref{defn10}. (see subsection \ref{sub3}).  $\overline{D}$ denotes the Dirac differential operator and the dual operator  $\overline{D}^*_\varphi $ of $\overline{D}$  is given by (\ref{7}). For $x=(x_0,x_1,...,x_n)\in \R^{n+1}$, $\Delta=\sum_{i=0}^{n}\frac{\partial^2}{\partial x_i^2}$. Then we can obtain our main results as follows.

\begin{thm}\label{thm1}
Given $f\in L^2(\Omega,\A,\varphi)$, there exists $u\in L^2(\Omega,\A,\varphi)$ such that
\begin{equation}\label{eq:5.1}
\begin{split}\overline{D}u=f\end{split}
\end{equation} with
\begin{equation}\label{eq:5.2}
\begin{split}\|u\|^2=\int_\Omega|u|^2_0e^{-\varphi}dx\leq 2^{2n}c \end{split}
\end{equation} if
\begin{equation}\label{eq:5}
\begin{split}|(f,\alpha)_\varphi|^2_0\leq c\|\overline{D}^*_\varphi\alpha\|^2=c\int_\Omega|\overline{D}^*_\varphi\alpha|^2_0e^{-\varphi}dx,~\forall \alpha\in C^\infty_0(\Omega,\A).\end{split}
\end{equation}
Conversely, if there exists $u\in L^2(\Omega,\A,\varphi)$ such that (\ref{eq:5.1}) is satisfied with
\begin{equation}
\begin{split}\|u\|^2 \leq  c \nonumber\end{split}
\end{equation}
Then we can get the inequality (\ref{eq:5}) for norm estimation.
\end{thm}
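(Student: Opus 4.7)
The plan is to follow H\"ormander's classical duality proof of the $L^2$ existence theorem for $\overline{\partial}$, adapted to the right Hilbert $\A$-module $L^2(\Omega,\A,\varphi)$. The two implications are quite different in character: the forward direction is a Hahn--Banach/Riesz representation argument driven by the a priori estimate (\ref{eq:5}), while the converse is a direct consequence of the definition of the formal adjoint $\overline{D}^*_\varphi$ together with a Cauchy--Schwarz inequality in the module.

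For the forward direction, I would start from the linear subspace
\[
V \;=\; \{\,\overline{D}^*_\varphi \alpha : \alpha \in C_0^\infty(\Omega,\A)\,\} \;\subset\; L^2(\Omega,\A,\varphi),
\]
and define a functional $T$ on $V$ by $T(\overline{D}^*_\varphi \alpha) = (\alpha,f)_\varphi$. The hypothesis (\ref{eq:5}) is exactly what is needed to make $T$ well defined on $V$ (independent of the representative $\alpha$, since $\overline{D}^*_\varphi \alpha_1 = \overline{D}^*_\varphi \alpha_2$ forces $(\alpha_1-\alpha_2, f)_\varphi = 0$) and continuous, with its norm controlled by $\sqrt{c}$ times a constant coming from the transition between the $\A$-valued pairing and its $|\cdot|_0$-norm. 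That constant is the place where I expect the factor $2^{2n}$ in (\ref{eq:5.2}) to be produced: passing from a Clifford-valued inner product to the scalar norm $|\cdot|_0$ costs a factor of the form $|ab|_0 \leq 2^n |a|_0|b|_0$ (as recorded in subsection \ref{sub1} of the paper), which gets squared when one applies scalar Cauchy--Schwarz. After extending $T$ from $V$ to all of $L^2(\Omega,\A,\varphi)$ by Hahn--Banach with the same bound on its norm, the Riesz representation theorem for right Hilbert $\A$-modules produces a unique $u\in L^2(\Omega,\A,\varphi)$ with $T(v) = (v,u)_\varphi$ for every $v$ and with $\|u\|^2 \leq 2^{2n}c$. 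Specializing $v = \overline{D}^*_\varphi \alpha$ yields $(\overline{D}^*_\varphi \alpha, u)_\varphi = (\alpha, f)_\varphi$ for every $\alpha\in C_0^\infty(\Omega,\A)$, which is precisely the weak form of $\overline{D}u = f$ claimed in (\ref{eq:5.1}).

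For the converse, take any $\alpha \in C_0^\infty(\Omega,\A)$. Using the definition of $\overline{D}^*_\varphi$ in (\ref{7}) together with $\overline{D}u = f$, write
\[
(f,\alpha)_\varphi \;=\; (\overline{D}u,\alpha)_\varphi \;=\; (u,\overline{D}^*_\varphi \alpha)_\varphi,
\]
and apply the Cauchy--Schwarz inequality for the right $\A$-module inner product $(\cdot,\cdot)_\varphi$ to obtain $|(f,\alpha)_\varphi|_0^2 \leq \|u\|^2\,\|\overline{D}^*_\varphi \alpha\|^2 \leq c\,\|\overline{D}^*_\varphi \alpha\|^2$, which is (\ref{eq:5}).

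The main obstacle I anticipate is not the logical skeleton, which is standard, but the careful handling of the module-analytic details that make Hahn--Banach, Riesz representation, and Cauchy--Schwarz all function correctly on $L^2(\Omega,\A,\varphi)$ as a right Hilbert $\A$-module, and in particular the bookkeeping of the Clifford-multiplication constants that are ultimately responsible for the $2^{2n}$ factor in (\ref{eq:5.2}). Once those module analogues are in hand, the argument proceeds exactly as in H\"ormander's original proof.
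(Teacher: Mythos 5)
Your proposal follows essentially the same route as the paper: restrict to the submodule $E=\{\overline{D}^*_\varphi\alpha : \alpha\in C_0^\infty(\Omega,\A)\}$, define the functional $\langle L_f,\overline{D}^*_\varphi\alpha\rangle=(f,\alpha)_\varphi$ (bounded by hypothesis (\ref{eq:5})), extend by the Clifford Hahn--Banach theorem (Theorem \ref{Hahn}), represent the extension by Riesz (Theorem \ref{Riesz}), read off the weak equation by specializing to $g=\overline{D}^*_\varphi\alpha$, and prove the converse by the module Cauchy--Schwarz inequality exactly as you write. The only place your account deviates is the bookkeeping of the $2^{2n}$: in the paper it is not produced by squaring a single $|ab|_0\leq 2^n|a|_0|b|_0$ bound inside Cauchy--Schwarz, but rather is the product of two separate $2^n$ factors, one coming from the Hahn--Banach extension itself (which only preserves the bound up to $c^*=2^n c$, via $|e_0|_0=2^{n/2}$, so the extended functional satisfies $|\langle\widetilde{L}_f,g\rangle|_0\leq\sqrt{2^nc}\,\|g\|$) and the other from the norm-extraction step, where the paper computes $|(u,u)_\varphi|_0^2\geq 2^{-n}\|u\|^4$ in the Clifford module (so $\|u\|^2\leq 2^{n/2}|(u,u)_\varphi|_0$) and then combines this with $|(u,u)_\varphi|_0\leq\sqrt{c^*}\|u\|$ to get $\|u\|^2\leq 2^{2n}c$. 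This is a refinement of your argument rather than a correction; your identification of the Clifford-norm overhead as the source of the constant is right, you just need to trace it through the two steps above rather than attributing it to a single squared Cauchy--Schwarz estimate.
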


The factor $2^{2n}$ in (\ref{eq:5.2}) comes from the definition of the norm in Clifford analysis. If $n=1$, then the factor would disappear which gives a necessary and sufficient condition in the theorem.
From the above theorem, we give the following sufficient condition on the existence of weak solutions for Dirac operator.
\begin{thm}\label{thm2}
Given $\varphi\in C^2(\Omega,\mathbb{R})$ and $n> 1$; $\Delta\varphi\geq0$,~and~$\frac{\partial^2 \varphi}{\partial x_j\partial x_i}=0,~i\neq j,~1\leq i,j\leq n$ and $\frac{\partial^2 \varphi}{\partial x^2_i}\leq 0,~1\leq i\leq n$. Then for all $ f\in L^2(\Omega,\A,\varphi)$ with $\int_\Omega\frac{|f|^2_0}{\Delta\varphi}e^{-\varphi}dx=c<\infty$, there exists a $u\in L^2(\Omega,\A,\varphi)$ such that $$\overline{D}u=f$$ with
$$\|u\|^2=\int_\Omega|u|^2_0e^{-\varphi}dx\leq2^{2n}\int_\Omega\frac{|f|^2_0}{\Delta\varphi}e^{-\varphi}dx.$$
\end{thm}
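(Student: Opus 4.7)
The plan is to derive Theorem \ref{thm2} from Theorem \ref{thm1} by verifying the hypothesis (\ref{eq:5}) with the explicit choice
\begin{equation*}
c=\int_\Omega\frac{|f|^2_0}{\Delta\varphi}\,e^{-\varphi}dx,
\end{equation*}
which is finite by assumption. The argument splits into two essentially independent steps: a Cauchy--Schwarz separation of $f$ from the test form $\alpha$, and a weighted a priori bound on $\alpha$ in terms of $\overline{D}^*_\varphi\alpha$. The structural hypotheses on $\varphi$ enter only through the second step.

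Since $\Delta\varphi\geq 0$, the weight $\sqrt{\Delta\varphi}$ is well defined on $\{\Delta\varphi>0\}$; on its complement $f$ must vanish almost everywhere for the integral defining $c$ to be finite, so one may restrict all integrations to $\{\Delta\varphi>0\}$ without loss. Writing $(f,\alpha)_0=(f/\sqrt{\Delta\varphi},\,\alpha\sqrt{\Delta\varphi})_0$ and applying Cauchy--Schwarz in $L^2(\Omega,\A,\varphi)$ yields
\begin{equation*}
|(f,\alpha)_\varphi|^2_0\leq\Bigl(\int_\Omega\frac{|f|^2_0}{\Delta\varphi}e^{-\varphi}dx\Bigr)\Bigl(\int_\Omega|\alpha|^2_0\,\Delta\varphi\,e^{-\varphi}dx\Bigr).
\end{equation*}

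The main obstacle, which I expect to consume the body of the proof, is the weighted Bochner--Kodaira--H\"ormander-type inequality
\begin{equation*}
\int_\Omega|\alpha|^2_0\,\Delta\varphi\,e^{-\varphi}dx\;\leq\;\|\overline{D}^*_\varphi\alpha\|^2\qquad\text{for every }\alpha\in C_0^\infty(\Omega,\A).
\end{equation*}
To obtain it I would expand $\|\overline{D}^*_\varphi\alpha\|^2$ using the explicit formula~(\ref{7}) for $\overline{D}^*_\varphi$ together with integration by parts. The derivatives landing on $e^{-\varphi}$ produce a zero-order quadratic form in $\alpha$ whose coefficients are the Hessian entries $\partial^2\varphi/\partial x_i\partial x_j$ twisted by products of Clifford basis elements; the remaining second-order piece reassembles into a nonnegative quantity, morally $\|\overline{D}\alpha\|^2$, which can then be discarded. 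Under the stated hypotheses the off-diagonal contributions with $i\neq j$ and $1\leq i,j\leq n$ vanish on account of $\partial^2\varphi/\partial x_i\partial x_j=0$; for the diagonal indices $1\leq i\leq n$ the relevant Clifford product is scalar and the sign condition $\partial^2\varphi/\partial x_i^2\leq 0$ makes each such contribution cooperate with the $x_0$-diagonal term, which (thanks to $\Delta\varphi\geq 0$) satisfies $\partial^2\varphi/\partial x_0^2\geq-\sum_{i=1}^n\partial^2\varphi/\partial x_i^2\geq 0$. The surviving sum then reassembles exactly into $\Delta\varphi\cdot|\alpha|^2_0$, producing the desired inequality.

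Combining the two estimates shows that the hypothesis (\ref{eq:5}) of Theorem \ref{thm1} is satisfied with the chosen $c$, and its conclusion then delivers $u\in L^2(\Omega,\A,\varphi)$ with $\overline{D}u=f$ and $\|u\|^2\leq 2^{2n}c$, which is exactly the stated bound. The factor $2^{2n}$ is inherited directly from Theorem \ref{thm1}, and no further constant is introduced in the argument.
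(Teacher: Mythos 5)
Your overall strategy is exactly the paper's: set $c=\int_\Omega\frac{|f|^2_0}{\Delta\varphi}e^{-\varphi}dx$, verify hypothesis~(\ref{eq:5}) of Theorem~\ref{thm1} by the weighted Cauchy--Schwarz inequality $|(f,\alpha)_\varphi|^2_0\leq c\int_\Omega|\alpha|^2_0\,\Delta\varphi\,e^{-\varphi}dx$, and then pair it with the a priori estimate $\int_\Omega|\alpha|^2_0\,\Delta\varphi\,e^{-\varphi}dx\leq\|\overline{D}^*_\varphi\alpha\|^2$ obtained by expanding $\|\overline{D}^*_\varphi\alpha\|^2=\|\overline{D}\overline{D}^*_\varphi\alpha\|$-type identities and discarding the nonnegative term $\|\overline{D}\alpha\|^2$. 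The Cauchy--Schwarz step is fine, and you identify the key inequality and the role of Theorem~\ref{thm1} correctly.

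However, your description of how the zero-order quadratic form arranges itself is off in a way worth correcting. In the paper's expansion, $\|\overline{D}^*_\varphi\alpha\|^2=\|\overline{D}\alpha\|^2+\int_\Omega|\alpha|^2_0\,\Delta\varphi\,e^{-\varphi}dx+I_3$. The term $\int|\alpha|^2_0\Delta\varphi\,e^{-\varphi}$ drops out all at once, from the identity $\overline{D}(D\varphi)=\Delta\varphi$; it is \emph{not} the end result of reassembling ``Hessian entries twisted by Clifford products.'' The quadratic form you describe is the additional, genuinely noncommutative remainder $I_3$, which arises from the product-rule correction $\sum_{j=1}^n(e_j\alpha-\alpha e_j)\,\partial_{x_j}(D\varphi)$. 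The structural hypotheses on $\varphi$ are then used to show $I_3\geq 0$, not that the surviving sum ``reassembles exactly into $\Delta\varphi\cdot|\alpha|^2_0$.'' Moreover, the $x_0$-diagonal entry $\partial^2\varphi/\partial x_0^2$ does not feed into $I_3$ at all: the mixed $e_0$-terms cancel identically (the paper's $I_7=0$), so your picture of the diagonal sign conditions ``cooperating with the $x_0$-diagonal term'' does not reflect the actual mechanics. Under the vanishing of off-diagonal Hessian entries, $I_3$ reduces to a sum over $1\leq i\leq n$ of $-\partial^2\varphi/\partial x_i^2$ against manifestly nonnegative quantities (a parity count over multi-indices $A$ according to whether $|A|^2$ is odd or even and whether $i\in A$), and the hypotheses $\partial^2\varphi/\partial x_i^2\leq 0$ give $I_3\geq 0$ directly. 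These are inaccuracies of description rather than logical gaps, since the sign works out in your favor regardless; but executing your sketch expecting exact reassembly would fail, and the bookkeeping for $I_3$ is in fact the bulk of the paper's argument, which you have left entirely unverified.
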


\begin{rem}
 Assuming $x=(x_0,x_1,...,x_n)\in \R^{n+1}$, it is easy to see that $\varphi(x)=x_0^2$ satisfies the conditions in Theorem \ref{thm2}. Another simple example would be
$$\varphi(x)=(n+1)x_0^2-\sum_{i=1}^{n}x_i^2.$$
It is obvious that $\Delta\varphi(x)=2$, $\frac{\partial^2 \varphi}{\partial x^2_i}=-2$, and $\frac{\partial^2 \varphi}{\partial x_j\partial x_i}=0,~i\neq j,~1\leq i,j\leq n$.

\end{rem}
\begin{cor}\label{cor1}
Given $\varphi\in C^2(\Omega,\mathbb{R}),$ and $\varphi(x)=\varphi(x_0)$ with $\varphi''(x_0)\geq0$. Then for all $ f\in L^2(\Omega,\A,\varphi)$ with $\int_\Omega\frac{|f|^2_0}{\varphi''}e^{-\varphi}dx=c<\infty$, there exists a $u\in L^2(\Omega,\A,\varphi)$ such that $$\overline{D}u=f$$ with
$$\|u\|^2=\int_\Omega|u|^2_0e^{-\varphi}dx\leq2^{2n}\int_\Omega\frac{|f|^2_0}{\varphi''}e^{-\varphi}dx.$$
\end{cor}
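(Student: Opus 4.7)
The plan is to derive Corollary \ref{cor1} directly from Theorem \ref{thm2}, by checking that the one-variable profile $\varphi(x)=\varphi(x_0)$ with $\varphi''(x_0)\geq 0$ meets every hypothesis of that theorem when viewed as a function on $\Omega\subset\R^{n+1}$.

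First I would compute the Hessian of $\varphi$ as a function of $(x_0,x_1,\dots,x_n)$. Since $\varphi$ depends only on the single coordinate $x_0$, every derivative involving $x_i$ for $1\leq i\leq n$ vanishes identically. In particular, $\frac{\partial^2\varphi}{\partial x_j\partial x_i}=0$ whenever $i\neq j$ with $1\leq i,j\leq n$, and $\frac{\partial^2\varphi}{\partial x_i^2}=0\leq 0$ for each $1\leq i\leq n$. This already confirms the two structural sign/vanishing conditions imposed in Theorem \ref{thm2}.

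Next I would identify the Laplacian:
\begin{equation*}
\Delta\varphi=\sum_{i=0}^{n}\frac{\partial^2\varphi}{\partial x_i^2}=\varphi''(x_0),
\end{equation*}
which is nonnegative by assumption, so the hypothesis $\Delta\varphi\geq 0$ is satisfied. The finiteness condition on $f$ becomes $\int_\Omega \frac{|f|^2_0}{\Delta\varphi}e^{-\varphi}dx=\int_\Omega \frac{|f|^2_0}{\varphi''}e^{-\varphi}dx=c<\infty$, which is exactly the integrability assumption imposed in the corollary.

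Finally I would invoke Theorem \ref{thm2} to obtain $u\in L^2(\Omega,\A,\varphi)$ with $\overline{D}u=f$ and the norm bound $\|u\|^2\leq 2^{2n}\int_\Omega \frac{|f|^2_0}{\varphi''}e^{-\varphi}dx$, which is the conclusion sought. There is no substantive obstacle here; the corollary is just the observation that a function depending only on $x_0$ automatically satisfies the diagonal-Hessian hypotheses of Theorem \ref{thm2}, so the single remaining convexity requirement collapses to $\varphi''(x_0)\geq 0$.
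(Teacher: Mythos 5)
Your proposal is correct and is essentially the only reasonable route: the paper states Corollary \ref{cor1} without proof, and the intended argument is precisely what you wrote, namely to observe that $\varphi(x)=\varphi(x_0)$ has $\partial^2\varphi/\partial x_j\partial x_i=0$ for $i\neq j$, $\partial^2\varphi/\partial x_i^2=0$ for $1\leq i\leq n$, and $\Delta\varphi=\varphi''(x_0)\geq 0$, so Theorem \ref{thm2} applies directly. One small wrinkle worth noting (a blemish of the paper rather than of your argument) is that Theorem \ref{thm2} is stated with the hypothesis $n>1$ while Corollary \ref{cor1} carries no such restriction; for $n=1$ one should instead invoke Corollary \ref{thm3}, whose bound (without the $2^{2n}$ factor) is in fact stronger, so the conclusion still holds.
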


It is noticed that there is nothing to do with the boundary conditions of $\Omega$ in the above results. This phenomenon is totally different with the famous  H\"ormander's $L^2$ existence theorems of several complex variables in \cite{H}. Then we can also have the following theorem on global solutions.

\begin{thm}\label{cor3}
Given $\varphi\in C^2(\R^{n+1},\mathbb{R})$ with all derivative conditions in Theorem \ref{thm1} satisfied. Then for all $ f\in L^2(\R^{n+1},\A,\varphi)$ with $\int_{\R^{n+1}}\frac{|f|^2_0}{\Delta\varphi}e^{-\varphi}dx=c<\infty$, there exists a $u\in L^2(\R^{n+1},\A,\varphi)$  satisfying $$\overline{D}u=f$$ with
$$\|u\|^2=\int_{\R^{n+1}}|u|^2_0e^{-\varphi}dx\leq2^{2n} \int_{\R^{n+1}}\frac{|f|^2_0}{\Delta\varphi}e^{-\varphi}dx.$$
\end{thm}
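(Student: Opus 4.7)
\medskip
\noindent\textbf{Proof proposal.} My plan is to reduce the global statement on $\R^{n+1}$ to the local statement of Theorem~\ref{thm2} via an exhaustion argument followed by weak compactness in the weighted Hilbert $\A$-module $L^2(\R^{n+1},\A,\varphi)$. First I would fix an exhaustion by the open balls $\Omega_k=\{x\in\R^{n+1}:|x|<k\}$. Since $\varphi\in C^2(\R^{n+1},\R)$ restricts to a function in $C^2(\Omega_k,\R)$ satisfying the hypotheses of Theorem~\ref{thm2}, and since $f|_{\Omega_k}\in L^2(\Omega_k,\A,\varphi)$ with $\int_{\Omega_k}|f|_0^2/(\Delta\varphi)\,e^{-\varphi}dx\leq c$, Theorem~\ref{thm2} produces a $u_k\in L^2(\Omega_k,\A,\varphi)$ with $\overline{D}u_k=f$ on $\Omega_k$ and
\begin{equation*}
\int_{\Omega_k}|u_k|_0^2\, e^{-\varphi}dx\leq 2^{2n}\int_{\Omega_k}\frac{|f|_0^2}{\Delta\varphi}e^{-\varphi}dx\leq 2^{2n}c.
\end{equation*}

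Next, I extend each $u_k$ by zero to all of $\R^{n+1}$, obtaining a sequence $\tilde u_k\in L^2(\R^{n+1},\A,\varphi)$ uniformly bounded in norm by $(2^{2n}c)^{1/2}$. By weak compactness of norm-bounded sets in a Hilbert space (applied componentwise in a fixed Clifford basis), a subsequence, still denoted $\tilde u_k$, converges weakly to some $u\in L^2(\R^{n+1},\A,\varphi)$. To verify $\overline{D}u=f$ in the weak sense, I fix an arbitrary $\alpha\in C_0^\infty(\R^{n+1},\A)$. Its support lies in some $\Omega_{k_0}$, so for every $k\geq k_0$ integration by parts inside $\Omega_k$ (legitimate because $\alpha$ vanishes near $\partial\Omega_k$) gives
\begin{equation*}
(\tilde u_k,\overline{D}^*_\varphi\alpha)_\varphi=(\overline{D}u_k,\alpha)_\varphi=(f,\alpha)_\varphi.
\end{equation*}
Since $\overline{D}^*_\varphi\alpha\in L^2(\R^{n+1},\A,\varphi)$, weak convergence yields $(u,\overline{D}^*_\varphi\alpha)_\varphi=(f,\alpha)_\varphi$ for every such $\alpha$, i.e., $\overline{D}u=f$ in the weak sense on $\R^{n+1}$.

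Finally, weak lower semicontinuity of the norm gives the quantitative bound
\begin{equation*}
\|u\|^2\leq\liminf_{k\to\infty}\|\tilde u_k\|^2\leq 2^{2n}\int_{\R^{n+1}}\frac{|f|_0^2}{\Delta\varphi}e^{-\varphi}dx.
\end{equation*}
The step I anticipate requiring the most care is the weak-limit passage: one must verify that $\overline{D}^*_\varphi\alpha$ genuinely lies in the weighted Hilbert $\A$-module, so that pairing against it is continuous in the weak topology, and that the componentwise weak compactness interacts correctly with the real inner product $(\cdot,\cdot)_0$ underlying the Clifford-valued pairing. Once this is in place, no boundary contributions appear (the remark preceding Theorem~\ref{cor3} emphasises precisely this feature), and the rest is routine.
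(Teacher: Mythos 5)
Your argument is correct, but it is a genuinely different and much longer route than the paper's. The paper gives no separate proof of this theorem: Theorems \ref{thm1} and \ref{thm2} are stated and proved for an \emph{arbitrary} open subset $\Omega\subseteq\R^{n+1}$ (the Riesz/Hahn--Banach construction and the identity $\|\overline{D}^*_\varphi\alpha\|^2=\|\overline{D}\alpha\|^2+\int_\Omega|\alpha|_0^2\,\Delta\varphi\,e^{-\varphi}dx+I_3$ only ever test against $\alpha\in C^\infty_0(\Omega,\A)$, so no boundedness, boundary regularity, or pseudoconvexity enters), and the sentence immediately preceding the theorem flags exactly this; the intended proof is simply to take $\Omega=\R^{n+1}$ in Theorem \ref{thm2}. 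Your exhaustion-by-balls argument --- solve on $\Omega_k$, extend by zero, extract a componentwise weak limit, pass to the limit against compactly supported $\overline{D}^*_\varphi\alpha$, and invoke weak lower semicontinuity of the norm --- is sound (the pairing $(\tilde u_k,\overline{D}^*_\varphi\alpha)_\varphi=(f,\alpha)_\varphi$ for $k\ge k_0$ is just the defining property of the weak solution on $\Omega_k$, since $\overline{D}^*_\varphi\alpha$ is supported in $\Omega_{k_0}$, and the $\A$-valued inner product is a finite real-linear combination of componentwise real $L^2$ pairings, so componentwise weak convergence suffices). What your approach buys is robustness: it would be the right tool if the local theorem were only available on bounded domains, as in the classical several-complex-variables setting. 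What it costs is that it obscures the point the authors are making, namely that their estimate is already global and boundary-free, so the passage to $\R^{n+1}$ requires no limiting procedure at all.
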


On the other hand, if the boundary of $\Omega$ is concerned, we consider a special kind of domain ${\Omega}_0=\{x\in\R^{n+1}:a\leq x_0\leq b\}$ for any $a,~b\in \R$ with $a<b$, then we can get the following theorem within $L^2$ space instead of $L^2$ weighted space.
\begin{thm}\label{thm5}
Let $ f\in L^2({\Omega_0},\A)$. Then there exists a $u\in L^2({\Omega_0},\A)$ such that $$\overline{D}u=f$$ with
$$\int_{\Omega_0}|u|^2_0dx\leq2^{2n}c(a,b)\int_{\Omega_0}{|f|^2_0}dx$$ and $c(a,b)$ is a factor depending on $a,~b$.
\end{thm}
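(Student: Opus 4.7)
The plan is to reduce Theorem \ref{thm5} to Corollary \ref{cor1} by choosing a weight $\varphi$ that depends only on $x_0$ and whose exponential $e^{-\varphi}$ is uniformly bounded between positive constants on the slab $\Omega_0$. Since $\Omega_0$ is bounded precisely in the $x_0$-direction, this is exactly where a one-variable convex weight is available, and it lets me turn a weighted estimate into an unweighted one at the price of a constant depending only on $a$ and $b$.

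Concretely, I would take $\varphi(x)=x_0^2$ on $\Omega_0$. This weight was already flagged in the Remark after Theorem \ref{thm2} as satisfying the hypotheses, and it has $\varphi''(x_0)=2>0$. Because $x_0\in[a,b]$, there exist positive constants
\[
m_1:=e^{-\max_{[a,b]} x_0^2},\qquad M_1:=e^{-\min_{[a,b]} x_0^2},
\]
depending only on $a,b$, such that $m_1\leq e^{-\varphi(x)}\leq M_1$ for all $x\in\Omega_0$. Consequently $L^2(\Omega_0,\A)$ and $L^2(\Omega_0,\A,\varphi)$ agree as sets with equivalent norms, and for the given $f\in L^2(\Omega_0,\A)$ one has
\[
\int_{\Omega_0}\frac{|f|_0^2}{\varphi''}\,e^{-\varphi}\,dx\;\leq\;\frac{M_1}{2}\int_{\Omega_0}|f|_0^2\,dx\;<\;\infty,
\]
so the hypothesis of Corollary \ref{cor1} is met.

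Next, I would invoke Corollary \ref{cor1} to produce $u\in L^2(\Omega_0,\A,\varphi)$ with $\overline{D}u=f$ and $\int_{\Omega_0}|u|_0^2\,e^{-\varphi}\,dx\leq 2^{2n}\int_{\Omega_0}(|f|_0^2/\varphi'')\,e^{-\varphi}\,dx$. Converting the weighted norm back to the ordinary $L^2$ norm via $m_1\leq e^{-\varphi}\leq M_1$ yields
\[
\int_{\Omega_0}|u|_0^2\,dx\;\leq\;\frac{1}{m_1}\int_{\Omega_0}|u|_0^2\,e^{-\varphi}\,dx\;\leq\;\frac{2^{2n}M_1}{2m_1}\int_{\Omega_0}|f|_0^2\,dx,
\]
giving the bound with $c(a,b)=M_1/(2m_1)$. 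I do not anticipate any serious obstacle: the only conceptual point is recognizing that boundedness in a single coordinate suffices to trivialize a weight depending only on that coordinate, and this is precisely the geometric feature of the slab $\Omega_0$ that makes the unweighted statement possible.
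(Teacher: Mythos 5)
Your proposal is correct and takes essentially the same route as the paper: the paper's proof also sets $\varphi(x)=x_0^2$, observes that $L^2(\Omega_0,\A)=L^2(\Omega_0,\A,\varphi)$ because $x_0$ is bounded on the slab, and then invokes Theorem \ref{thm2} (of which Corollary \ref{cor1} is the $\varphi=\varphi(x_0)$ specialization). You have merely made explicit the two-sided bound on $e^{-\varphi}$ and the resulting constant $c(a,b)$, which the paper leaves unstated.
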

\begin{proof}
Let $\varphi(x)=x_0^2$. It can be obtained that $L^2({\Omega_0},\A)=L^2({\Omega_0},\A,\varphi)$ for the boundary of $x_0$.
Then the theorem is proved with Theorem \ref{thm2}.
\end{proof}

\begin{rem}
In particular, any bounded domain $\Omega$ in $\R^{n+1}$ can be regarded as one type of $\Omega_0$. Therefore, it comes from Theorem \ref{thm5} that for any $f\in L^2(\Omega,\A)$, we can find a weak solution of Dirac operator $\overline{D}u=f$ with $u\in L^2(\Omega,\A)$.
\end{rem}

\section{Preliminaries}
To make the paper self-contained, some basic notations and results used in this paper are included.
\subsection{The Clifford algebra $\A$ }\label{sub1}

Let $\A$ be a real Clifford algebra over an (n+1)-dimensional real vector space $\R^{n+1}$ with orthogonal basis $e:=\{e_0,e_1,...,e_n\}$, where $e_0=1$ is a unit element in $\R^{n+1}$. Furthermore,
 \begin{equation} \label{eq:1}
 \left\{ \begin{aligned}
e_ie_j+e_je_i&= 0,~i\neq j \\
 e_i^2&= -1,~i=1,...,n.
 \end{aligned} \right.\nonumber
 \end{equation}
Then $\A$ has its basis $$\{e_A=e_{h_1\cdots h_r}=e_{h_1}\cdots e_{h_r}: 1\leq h_1<...<h_r\leq n, 1\leq r\leq n\}.$$
If $i\in \{h_1,...,h_r\}$, we denote $i\in A$ and if $i\not\in \{h_1,...,h_r\}$, we denote $i\not\in A$. $A-{i}$ means $\{h_1,...,h_r\}\setminus\{i\}$ and $A+{i}$ means $\{h_1,...,h_r\}\cup\{i\}$.
So the real Clifford algebra is composed of elements having the type $a=\sum\limits_{A}x_Ae_A$, in which $x_A\in \R$ are real numbers.
For $a\in \A$, we give the inversion in the Clifford algebra as follows:
$a^*=\sum\limits_{A}x_Ae_A^*$
where $e_A^*=(-1)^{|A|}e_A$ and $|A|=n(A)$ is the $r\in\mathbb{Z}^+$ as $e_A=e_{h_1\cdots h_r}$. When $A=\emptyset$, $e_A=e_0$, $|A|=0$. Next, we define the reversion in the Clifford algebra, which is given by
$a^\dag=\sum\limits_{A}x_Ae_A^\dag$
where $e_A^\dag=(-1)^{(|A|-1)|A|/2}e_A.$ Now we present the involution which is a combination of the inversion and the reversion introduced above.
$$\bar{a}=\sum\limits_{A}x_A\bar{e}_A$$
where $\bar{e}_A=e_A^{*\dag}=(-1)^{(|A|+1)|A|/2}e_A.$ From the definition, one can easily deduce that $e_A\bar{e}_A=\bar{e}_Ae_A=1.$ Furthermore, we have $$\overline{\lambda\mu}=\bar{\mu}\bar{\lambda},~~\forall \lambda, \mu\in \A.$$
Let $a=\sum\limits_{A}x_Ae_A$ be a Clifford number. The coefficient $x_A$ of the $e_A$-component will also be denoted by $[a]_A$. In particular the coefficient $x_0$ of the $e_0$-component will be denoted by $[a]_0$, which is called the scalar part of the Clifford number $a$. An inner product on $\A$ is defined by putting for any $\lambda,\mu\in \A$, $(\lambda,\mu)_0:=2^n[\lambda\bar{\mu}]_0=2^n\sum\limits_{A}\lambda_A\mu_A$. The corresponding norm on $\A$ reads $|\lambda|_0=\sqrt{(\lambda,\lambda)_0}$.

We define a real functional on $\A$ that $\tau_{e_A}:\A \rightarrow \R$
$$\langle \tau_{e_A},\mu\rangle=2^n(-1)^{(|A|+1)|A|/2}\mu_A.$$ In the special case where $A=\emptyset$ we have
$$\langle \tau_{e_0},\mu\rangle=2^n\mu_0.$$

Let $\Omega$ be an open subset of $\R^{n+1}$. Then functions $f$ defined in $\Omega$ and with values in $\A$ are considered. They are of the form
$$f(x)=\sum_{A}f_A(x)e_A$$ where $f_A(x)$ are functions with real value. Let $\overline{D}$ denotes the Dirac differential operator
$$\overline{D}=\sum_{i=0}^{n}e_i\partial_{x_i},$$
its action on functions from the left and from the right being governed by the rules
$$\overline{D}f=\sum_{i,A}e_ie_A\partial_{x_i}f_A~\mbox{and}~f\overline{D}=\sum_{i,A}e_Ae_i\partial_{x_i}f_A.$$
$f$ is called left-monogenic if $\overline{D}f=0$ and it is called right-monogenic if $f\overline{D}=0$. The conjugate operator is given by
$$D=\sum_{i=0}^{n}\bar{e}_i\partial_{x_i}.$$ It can be found that $$\overline{D}D=D\overline{D}=\Delta$$ where $\Delta$ denotes the classical Laplacian in $\R^{n+1}$.
When $n=1$, one can think of $x_0$ as the real part and of $x_1$ as the imaginary part of the variable and to identify $e_1$ with $i$. the operator $\overline{D}$ then take the form $\overline{D}=\partial_{x_0}+i\partial_{x_1}$, which is similar with the operator $\bar{\partial}$ in complex analysis.

\subsection{Modules over Clifford algebras} This subsection is to give some general information concerning a class of topological modules over Clifford algebras. In the sequel definitions and properties will be stated for left $\A$-module and their duals, the passage to the case of right $\A$-module being straight-forward.

\begin{defn}{\bf (unitary left $\A$-module)}
Let $X$ be a unitary left $\A$-module, i.e. $X$ is abelian group and a law $(\lambda,f)\rightarrow\lambda f:\A\times X\rightarrow X$ is defined such that $\forall\lambda,\mu\in \A$, and $f,~g\in X$
\begin{enumerate}
  \item [(1)] $(\lambda+\mu)f=\lambda f+\mu f$,
  \item [(2)] $\lambda\mu f=\lambda(\mu f)$,
  \item [(3)] $\lambda(f+g)=\lambda f+\lambda g$,
  \item [(4)] $e_0 f=f$.
\end{enumerate}
Moreover, when speaking of a submodule $E$ of the unitary left $\A$-module $X$, we mean that $E$ is a non empty subset of $X$ which becomes a unitary left $\A$-module too when restricting the module operations of $X$ to $E$.
\end{defn}

\begin{defn}{\bf (left $\A$-linear operator)}
If $X,Y$ are unitary left $\A$-modules, then $T:X\rightarrow Y$ is said to be a left $\A$-linear operator,
if $\forall~f,~g\in X$ and $\lambda\in \A$
$$T(\lambda f+g)=\lambda T(f)+T(g).$$
The set of all $``T"$ is denoted by $L(X,Y)$. If $Y=\A,~L(X,\A)$ is called the algebraic dual of $X$ and denoted by $X^{*alg}$. Its elements are called left $\A$-linear functionals on $X$ and for any $T\in X^{*alg}$ and $f\in X$, we denote by $\langle T,f \rangle$ the value of $T$ at $f$.
\end{defn}

\begin{defn}\label{bounded}{\bf (bounded functional)}
An element $T\in X^{*alg}$ is called bounded, if there exist a semi-norm $p$ on $X$ and $c>0$ such that for all $f\in X$
$$|\langle T,f\rangle|_0\leq c\cdot p(f).$$
\end{defn}

\begin{thm}\label{Hahn}{\bf (Hahn-Banach type theorem)}\cite{c}
Let $X$ be a unitary left $\A$-module with semi-norm $p$, $Y$ be a submodule of $X$, and $T$ be a left $\A$-linear
functional on $Y$ such that for some $c>0,$ $$|\langle T,g\rangle|_0\leq c\cdot p(g),~~ \forall g\in Y$$
Then there exists a left $\A$-linear functional $\widetilde{T}$ on $X$ such that
\begin{enumerate}
  \item [(1)] $\widetilde{T}\mid_Y=T$,
  \item [(2)] for some $c^*>0$,~$|\langle \widetilde{T},f\rangle|_0\leq c^*\cdot p(f)$,~~$\forall f\in X$.
\end{enumerate}
\end{thm}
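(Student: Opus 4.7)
The plan is to reduce the theorem to the classical real Hahn--Banach theorem by projecting onto the scalar part, and then to lift the resulting real-linear extension back to a left $\A$-linear one using the Clifford module structure.

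The first step is to pass to the scalar part: set $T_0(g) := [\langle T, g\rangle]_0$ for $g \in Y$. Because $|\mu|_0^2 = 2^n\sum_A \mu_A^2 \geq 2^n[\mu]_0^2$ for every $\mu \in \A$, the functional $T_0$ is $\R$-linear on $Y$ and satisfies $|T_0(g)| \leq 2^{-n/2}|\langle T, g\rangle|_0 \leq 2^{-n/2}c\,p(g)$. Viewing $X$ merely as a real vector space equipped with the semi-norm $p$, the classical Hahn--Banach theorem produces an $\R$-linear extension $\widetilde{T_0}\colon X \to \R$ with the same bound.

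The second step is to reconstruct the $\A$-linear extension by the explicit formula
\[
\langle \widetilde{T}, f\rangle \;:=\; \sum_A e_A\, \widetilde{T_0}(\bar{e}_A f), \qquad f \in X.
\]
This is the Clifford analogue of the complex recipe $\widetilde{T}(f) = \widetilde{T_0}(f) - i\,\widetilde{T_0}(if)$, and is motivated by the decomposition $\mu = \sum_A e_A[\bar{e}_A\mu]_0$ valid for any $\mu \in \A$, which is a direct consequence of $\bar{e}_A e_A = 1$ and $[\bar{e}_A e_B]_0 = \delta_{AB}$. Applying this identity with $\mu = \langle T, g\rangle$ and using left $\A$-linearity of $T$ on $Y$ yields $\widetilde{T}|_Y = T$. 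For the norm estimate one uses $|\langle \widetilde{T}, f\rangle|_0^2 = 2^n\sum_A \widetilde{T_0}(\bar{e}_A f)^2$ together with the bound on $\widetilde{T_0}$ and the fact that the relevant semi-norms on Clifford modules satisfy $p(\bar{e}_A f) \leq K\,p(f)$, producing a constant $c^*$ depending on $c$, $n$, and $K$.

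The main obstacle is verifying that $\widetilde{T}$ so defined is actually left $\A$-linear: since $\widetilde{T_0}$ is only $\R$-linear, with no built-in compatibility with the Clifford action, this is not automatic. It suffices to check $\widetilde{T}(e_B f) = e_B\widetilde{T}(f)$ for every basis element $e_B$. Expanding both sides using the module axiom $(\bar{e}_A e_B)f = \bar{e}_A(e_B f)$, and using that $\bar{e}_A e_B$ is a single signed basis element, a change of summation variable reduces the identity to a purely algebraic equality of signed basis products in $\A$. This equality follows from the defining Clifford relations $e_ie_j + e_je_i = 0$, $e_i^2 = -1$ together with $\bar{e}_A e_A = 1$ and $e_A^2 = (-1)^{|A|(|A|+1)/2}$, and is the one piece of nontrivial sign bookkeeping in the argument.
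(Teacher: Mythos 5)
Your proposal is correct and is, in essence, the standard argument. Note first that the paper does not supply its own proof of this Hahn--Banach type theorem: it is quoted from the reference \cite{c} (Brackx--Delanghe--Sommen), where the proof goes exactly by the route you take, namely projecting onto the $e_0$-component, invoking the real Hahn--Banach theorem, and lifting back via $\langle\widetilde T,f\rangle=\sum_A e_A\widetilde{T_0}(\bar e_A f)$, which is the Clifford analogue of the complex recipe $\widetilde T(f)=\widetilde{T_0}(f)-i\widetilde{T_0}(if)$. Your identity $\mu=\sum_A e_A[\bar e_A\mu]_0$ (from $[\bar e_A e_B]_0=\delta_{AB}$), the verification of $\widetilde T|_Y=T$ via $T(\bar e_A g)=\bar e_A T(g)$, and the left $\A$-linearity check (which indeed reduces to $[\bar e_A\widetilde T(e_Bf)]_0=[\bar e_Ae_B\widetilde T(f)]_0$, a sign identity that closes cleanly since $\bar e_Ae_B$ is a signed basis element) are all sound. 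One small point worth making explicit: the bound $p(\bar e_A f)\le K\,p(f)$ is not a consequence of $p$ being an arbitrary semi-norm, but of $p$ being a \emph{proper} semi-norm compatible with the module action (the standing hypothesis in \cite{c}); for the $L^2$-norm actually used in this paper one has $p(\bar e_Af)=p(f)$, i.e.\ $K=1$, and then your estimate gives $c^*=2^{n/2}c$, which is precisely the constant the paper extracts (it writes $\sqrt{c^*}=\sqrt{c}\,|e_0|_0$ with $|e_0|_0=2^{n/2}$).
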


\begin{defn}{\bf (inner product on a unitary right $\A$-module)}
Let $H$ be a unitary right $\A$-module, then a function $(~,~):~H\times H\rightarrow \A$ is said to be a inner product on $H$ if for all $ f,g,h\in H$ and $\lambda\in \A$,
\begin{enumerate}
  \item [(1)] $(f,g+h)=(f,g)+(f,h)$,
  \item [(2)] $(f,g\lambda)=(f,g)\lambda$,
  \item [(3)] $(f,g)=\overline{(g,f)}$,
  \item [(4)] $\langle\tau_{e_0},(f,f)\rangle\geq0$ and $\langle\tau_{e_0},(f,f)\rangle=0~ \mbox{if and only if} ~f=0$,
  \item [(5)] $\langle\tau_{e_0},(f\lambda,f\lambda)\rangle\leq|\lambda|^2_0\langle\tau_{e_0},(f,f)\rangle$.
\end{enumerate}\end{defn}
From the definition on inner product, putting for each $f\in H$
$$\|f\|^2=\langle\tau_{e_0},(f,f)\rangle,$$
then it can be obtained that for any $f,g\in H,$
\begin{equation} \label{eq:2}
\begin{split}
|\langle\tau_{e_0},~(f,g)\rangle|\leq\|f\|\|g\|,\|f+g\|\leq\|f\|+\|g\|.
 \end{split}\nonumber
 \end{equation}
 Hence, $\|\cdot\|$ is a proper norm on $H$ turning it into a normed right $A$-module.
Moreover, we have the following Cauchy-Schwarz inequality.
\begin{prop}\cite{c}\label{prop1}
For all $ f,g\in H,$ $|(f,g)|_0\leq\|f\|\|g\|.$
\end{prop}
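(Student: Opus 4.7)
The plan is to imitate the classical Cauchy--Schwarz proof, but working with the $\mathcal{A}$-valued inner product and then passing to the scalar part via the functional $\tau_{e_0}$. The crucial observation is that although $(f,g)$ takes values in the full Clifford algebra, the axioms allow us to transform it into a non-negative real expression whose minimization yields the desired bound.

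First I would expand, for arbitrary $\lambda\in\mathcal{A}$,
\[
(f-g\lambda,\,f-g\lambda)=(f,f)-(f,g)\lambda-\overline{\lambda}\,(g,f)+(g\lambda,g\lambda),
\]
using axioms (1)--(3) of the inner product together with the identity $\overline{\lambda\mu}=\bar\mu\bar\lambda$. Applying $\langle\tau_{e_0},\cdot\rangle$ to both sides, axiom (4) gives that the left-hand side is $\geq 0$, while on the right the last term is bounded by axiom (5): $\langle\tau_{e_0},(g\lambda,g\lambda)\rangle\leq|\lambda|_0^{2}\,\|g\|^{2}$. The two cross terms combine: since $\langle\tau_{e_0},\mu\rangle=2^{n}[\mu]_{0}$ and $[\bar a]_{0}=[a]_{0}$ (because the involution fixes the scalar part), we get
\[
\langle\tau_{e_0},\overline{\lambda}(g,f)\rangle=\langle\tau_{e_0},\overline{(f,g)\lambda}\rangle=\langle\tau_{e_0},(f,g)\lambda\rangle,
\]
so the inequality reduces to the real scalar statement
\[
\|f\|^{2}-2\,\langle\tau_{e_0},(f,g)\lambda\rangle+|\lambda|_0^{2}\,\|g\|^{2}\geq 0.
\]

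Next I would choose $\lambda=t\,\overline{(f,g)}$ for a real parameter $t\in\mathbb{R}$. Using the definition $(\lambda,\mu)_{0}=2^{n}[\lambda\bar\mu]_{0}$, one computes $\langle\tau_{e_0},(f,g)\lambda\rangle=t\,|(f,g)|_0^{2}$ and $|\lambda|_0^{2}=t^{2}\,|(f,g)|_0^{2}$ (the latter because the involution preserves the Clifford norm). The inequality becomes the real quadratic
\[
\|f\|^{2}-2t\,|(f,g)|_0^{2}+t^{2}\,|(f,g)|_0^{2}\,\|g\|^{2}\geq 0,\qquad\forall t\in\mathbb{R}.
\]
Requiring non-negative discriminant gives $|(f,g)|_0^{4}\leq\|f\|^{2}\,\|g\|^{2}\,|(f,g)|_0^{2}$, and dividing by $|(f,g)|_0^{2}$ (the case $(f,g)=0$ being trivial) yields the claim.

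The main subtlety I anticipate is the handling of the cross terms: because the inner product is Clifford-valued and non-commutative, one has to be careful that $(f,g)\lambda$ and $\overline{\lambda}(g,f)$ really do contribute equal scalar parts, which requires the precise interplay between $\overline{\lambda\mu}=\bar\mu\bar\lambda$ and the fact that conjugation is the identity on the $e_{0}$-component. Beyond that, the argument is essentially the real-variable Cauchy--Schwarz trick after the proper choice $\lambda=t\,\overline{(f,g)}$, designed so that the Clifford-valued term $(f,g)\lambda$ has purely scalar trace proportional to $|(f,g)|_0^{2}$.
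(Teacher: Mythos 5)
Your argument is correct, but note first that the paper itself offers no proof of this proposition: it is quoted verbatim from Brackx--Delanghe--Sommen \cite{c}, so there is no in-paper argument to compare against. Your route is the natural one and all the delicate points are handled properly: additivity in the first slot (needed to expand $(f-g\lambda,f-g\lambda)$) does follow from axioms (1) and (3); axiom (5) is only an inequality, but you use it in the correct direction; the identification $\langle\tau_{e_0},\overline{\lambda}(g,f)\rangle=\langle\tau_{e_0},(f,g)\lambda\rangle$ is valid because $\bar e_0=e_0$ so the involution fixes the scalar part; and the choice $\lambda=t\,\overline{(f,g)}$ gives $\langle\tau_{e_0},(f,g)\lambda\rangle=t\,|(f,g)|_0^2$ precisely because $(\mu,\mu)_0=2^n[\mu\bar\mu]_0$, so the extraneous factors $2^n$ cancel and the final bound is sharp (no $2^{2n}$-type loss, unlike in Theorem 1.1). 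Two small points: you wrote ``non-negative discriminant'' where you mean non-positive (the quadratic $at^2+bt+c\ge 0$ for all $t$ with $a>0$ forces $b^2-4ac\le 0$); the inequality you then write down is the correct one, so this is only a wording slip. And the degenerate case $a=0$ of the quadratic is covered by your remark that $(f,g)=0$ is trivial, since $\|g\|=0$ forces $g=0$ and hence $(f,g)=0$. Alternatively one can avoid the discriminant altogether by substituting $t=1/\|g\|^2$ directly.
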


\begin{defn}{\bf (right Hilbert $\A$-module)}
Let $H$ be a unitary right $\A$-module provided with an inner product $(~,~)$. Then is it called a right Hilbert $\A$-module if it is complete for the norm topology derived from the inner product.
\end{defn}

\begin{thm}\label{Riesz}{\bf (Riesz representation theorem)}\cite{c}
Let $H$ be a right Hilbert $\A$-modules and $T\in H^{*alg}$. Then $T$ is bounded if and only if there exists a (unique) element $g\in H$ such that for all $f\in H$,
$$T(f):=\langle T,f\rangle=(g,f).$$
\end{thm}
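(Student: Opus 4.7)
The theorem splits into two implications. The converse is immediate from Proposition \ref{prop1}: if $T(f)=(g,f)$ then $|T(f)|_0=|(g,f)|_0\leq\|g\|\,\|f\|$, so $T$ is bounded in the sense of Definition \ref{bounded} with the Hilbert-module norm as the semi-norm. I will focus on the harder direction, the construction of $g$ from a bounded $T$.

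My plan is to reduce the Clifford-valued problem to the classical real-Hilbert-space Riesz theorem by extracting the scalar part of $T$. First observe that $\|f\|^2=\langle\tau_{e_0},(f,f)\rangle=2^n[(f,f)]_0$ coincides with the norm induced by the real bilinear form $\langle x,y\rangle_{\R}:=2^n[(x,y)]_0$, so $(H,\langle\cdot,\cdot\rangle_{\R})$ is a genuine real Hilbert space carrying the same topology as $H$. Define $T_0\colon H\to\R$ by $T_0(f):=[T(f)]_0$. The elementary inequality $|[T(f)]_0|^2\leq 2^{-n}|T(f)|_0^2$ together with boundedness of $T$ with respect to $\|\cdot\|$ shows that $T_0$ is a bounded real-linear functional on $(H,\langle\cdot,\cdot\rangle_{\R})$. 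The classical Riesz theorem then yields a unique $g_0\in H$ with
\begin{equation*}
T_0(f)=\langle g_0,f\rangle_{\R}=2^n[(g_0,f)]_0\quad\text{for all }f\in H.
\end{equation*}
I will set $g:=2^n g_0$ and verify $T(f)=(g,f)$ component by component. The crucial identity is
\begin{equation*}
[T(f)]_A=[T(f)\bar{e}_A]_0=[T(f\bar{e}_A)]_0=T_0(f\bar{e}_A),
\end{equation*}
which rests on right-$\A$-linearity of $T$ and the fact that $e_A\bar{e}_A=1$ implies $[e_B\bar{e}_A]_0=\delta_{AB}$. Substituting the Riesz representation of $T_0$ and using right-linearity of the inner product,
\begin{equation*}
T_0(f\bar{e}_A)=2^n[(g_0,f\bar{e}_A)]_0=[(g,f)\bar{e}_A]_0=[(g,f)]_A,
\end{equation*}
so every Clifford coefficient of $T(f)$ matches that of $(g,f)$, yielding $T(f)=(g,f)$. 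Uniqueness follows from positive-definiteness: $(g-g',f)=0$ for every $f$ and $f=g-g'$ give $\|g-g'\|^2=0$.

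The main subtlety I anticipate is resisting the temptation to apply the classical Riesz theorem separately to each component functional $f\mapsto[T(f)]_A$; that route produces a family $\{g_A\}$ that must then be shown to arise coherently from a single right-module element, introducing a combinatorial consistency check across all multi-indices. Working instead with the single scalar functional $T_0$ and leveraging right multiplication by $\bar{e}_A$ to project out every other grade keeps the argument linear in complexity. A secondary technical point is that the ``bounded'' hypothesis must be interpreted relative to the Hilbert-module semi-norm $\|\cdot\|$; if a more general semi-norm $p$ is allowed in Definition \ref{bounded}, one would first invoke the Hahn--Banach extension in Theorem \ref{Hahn} to descend to the $\|\cdot\|$-bounded case before applying the construction above.
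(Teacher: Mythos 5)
The paper offers no proof of this theorem: it is quoted verbatim from the monograph of Brackx, Delanghe and Sommen \cite{c}, so there is nothing internal to compare against. Your argument is correct and self-contained, and it is essentially the standard proof of the module-valued Riesz theorem. The two pillars both check out: $(H,\langle\cdot,\cdot\rangle_{\R})$ with $\langle x,y\rangle_{\R}=2^n[(x,y)]_0$ is a genuine real Hilbert space (symmetry follows from $(x,y)=\overline{(y,x)}$ and $[\bar\mu]_0=[\mu]_0$, completeness because its norm is exactly $\|\cdot\|$), and the projection identity $[T(f)]_A=[T(f)\bar e_A]_0=T_0(f\bar e_A)$ is legitimate because $[e_B\bar e_A]_0=\delta_{AB}$ and $T$ is right $\A$-linear on the right module $H$. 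With $g=2^ng_0$ the component-by-component matching and the uniqueness argument via axiom (4) are both fine. One small caveat on your closing remark: invoking Theorem \ref{Hahn} to ``descend'' from an arbitrary dominating semi-norm $p$ to a $\|\cdot\|$-bound does not work --- Hahn--Banach extends a functional off a submodule, it does not replace one semi-norm by another, and a bound with respect to an arbitrary discontinuous semi-norm genuinely would not imply representability. For a right Hilbert $\A$-module the only semi-norm in play in Definition \ref{bounded} is (a multiple of) $\|\cdot\|$ itself, which is how the hypothesis must be read, so that sentence should simply be dropped rather than patched.
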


\subsection{Hilbert space of square integrable functions}\label{sub3}
Now we extend the standard Hilbert space of square integrable functions to Clifford algebra. First, we denote $L^1(\Omega,\mu)$ and $L^2(\Omega,\mu)$ be the sets of all integrable or square integrable functions defined on the domain $\Omega\in \R^{n+1}$ with respect to the measure $\mu$. Then $L^1(\Omega,\A,\mu)$ and $L^2(\Omega,\A,\mu)$ are defined as the sets of functions $f:\Omega\rightarrow \A$ which are integrable or square integrable with respect to $\mu$, i.e., if $f=\sum\limits_Af_Ae_A$, then for each $A$, $f_A\in L^1(\Omega,\mu)$ and $f^2_A\in L^1(\Omega,\mu)$, respectively. Then {\bf one may easily check that $L^1(\Omega,\A,\mu)$ and $L^2(\Omega,\A,\mu)$ are unitary bi-$\A$-module, i.e., unitary left-$\A$-module and unitary right-$\A$-module}. Furthermore, for any $f,g\in L^2(\Omega,\A,\mu)$, $\bar{f}\in L^2(\Omega,\A,\mu)$ while $\bar{f}g\in L^1(\Omega,\A,\mu)$, where $\bar{f}(x)=\overline{f(x)}$ and $(\bar{f}g)(x)=\bar{f}(x)g(x),~x\in \Omega$. Consider as a right $\A$-module, define for $f,g\in L^2(\Omega,\A,\mu)$ that
$$(f,g)=\int_{\Omega}\bar{f}(x)g(x)d\mu.$$
Furthermore for any real linear functional $T$ on $\A$
$$\langle T,(f,g)\rangle=\langle T,\int_{\Omega}\bar{f}(x)g(x)d\mu\rangle=\int_{\Omega}\langle T,\bar{f}(x)g(x)\rangle d\mu.$$
Consequently, taking $T=\tau_{e_0}$ we find that
\begin{equation} \label{eq:3}
\begin{split}
\langle \tau_{e_0},(f,f)\rangle&=\langle \tau_{e_0},\int_{\Omega}\bar{f}(x)f(x)d\mu\rangle=\int_{\Omega}\langle \tau_{e_0},\bar{f}(x)f(x)\rangle d\mu\\&=\int_{\Omega}|f(x)|^2_0d\mu.
 \end{split}\nonumber
 \end{equation}
Hence, for all $f\in L^2(\Omega,\A,\mu)$, $\langle \tau_{e_0},(f,f)\rangle\geq 0$ and $\langle \tau_{e_0},(f,f)\rangle=0$ if and only if $f=0$ a.e. in $\Omega$. Then it is easy to see that under the inner product defined, all conditions for $L^2(\Omega,\A,\mu)$ to be a unitary right inner product $\A$-module are satisfied. Since $L^2(\Omega,\A,\mu)=\prod_{A}L^2(\Omega,\mu)$, we have that $L^2(\Omega,\A,\mu)$ is complete; in other words $L^2(\Omega,\A,\mu)$ is a right Hilbert $\A$-module, with the norm $$\|f\|^2=\langle \tau_{e_0},(f,f)\rangle=\int_{\Omega}|f(x)|^2_0d\mu$$ for $f\in L^2(\Omega,\A,\mu)$.

\begin{defn}\label{defn10}{\bf (weighted $L^2$ space)}
Similar with $L^2(\Omega,\A,\mu)$, we can define the weighted $L^2(H,\A,\varphi)$ for a given function $\varphi\in C^2(\Omega, \R)$. First, let
$$L^2(\Omega,\varphi)=\big\{f|f:\Omega\rightarrow \R,~\int_\Omega|f(x)|^2e^{-\varphi}~dx<+\infty\big\}.$$Then we denote
$$L^2(H,\A,\varphi)=\{f|f:\Omega\rightarrow \A,~f=\sum\limits_Af_Ae_A,~f_A\in L^2(\Omega,\varphi)\}.$$
Moreover, for all $f,g\in L^2(H,\A,\varphi)$, we define
$$(f,g)_\varphi=\int_\Omega \bar{f}(x)g(x)e^{-\varphi}dx.$$
Then it is also easy to see $L^2(\Omega,\A,\varphi)$ is a right Hilbert $\A$-module, with the norm
\begin{equation} \label{norm}
\begin{split}
\|f\|^2=\langle \tau_{e_0},(f,f)_\varphi\rangle=\int_{\Omega}|f(x)|^2_0e^{-\varphi}dx
\end{split}\nonumber
 \end{equation}
 for $f\in L^2(\Omega,\A,\varphi)$.
\end{defn}

\subsection{Cauchy's integral formula}
Let $M$ be an (n+1)-dimensional differentiable and oriented manifold contained in some open subset $\Sigma$ of $\R^{n+1}$. By means of the n-forms
$$d\hat{x}_i=dx_0\wedge\cdots\wedge dx_{i-1}\wedge dx_{x_{i+1}}\wedge \cdots \wedge dx_n,~i=0,1,...,n,$$
an $\A$-valued n-form is introduced by putting
$$d\sigma=\sum_{i=0}^{n}(-1)^ie_id\hat{x}_i,$$ similarly, denote
$$d\bar{\sigma}=\sum_{i=0}^{n}(-1)^i\bar{e}_id\hat{x}_i.$$
%If $dS$ stands for the classical surface element and $n=\sum_{i=0}^{n}e_in_i$ where $n_i$ is the i-th component of the outward pointing normal, then the $\A$-valued surface element $d\sigma$ can be written as $d\sigma=n\cdot dS$.
Furthermore the volume-element $$dx=dx_0\wedge\cdots\wedge dx_n$$ is used.
\begin{prop}{\bf (Stokes-Green Theorem)}\cite{c} If $f,g\in C^1(\Sigma,\A)$ then for any (n+1)-chain $\Omega$ on $M\subset \Sigma$,
$$\int_{\partial\Omega}fd\sigma g=\int_\Omega(f\overline{D})gdx+\int_\Omega f(\overline{D}g)dx,$$
$$\int_{\partial\Omega}fd\bar{\sigma} g=\int_\Omega(fD)gdx+\int_\Omega f(Dg)dx.$$
\end{prop}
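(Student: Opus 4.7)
The plan is to reduce the statement to the classical Stokes theorem applied to an $\A$-valued $n$-form. The key observation is that the coordinate differentials $dx_i$ are scalar and hence commute with the Clifford generators $e_j$, so even though $f$ and $g$ are $\A$-valued and the multiplication is non-commutative, the exterior derivative still satisfies a Leibniz-type rule provided the \emph{order} of factors is preserved. Writing $f\,d\sigma\,g = \sum_{i}(-1)^{i}\, f e_i g \, d\hat{x}_i$, the product rule gives
\begin{equation*}
d(f\,d\sigma\,g) = \sum_{i,j}(-1)^{i}\bigl[(\partial_{x_j}f)\,e_i\,g + f\,e_i\,(\partial_{x_j}g)\bigr]\,dx_j\wedge d\hat{x}_i.
\end{equation*}

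Next, I would invoke the elementary identity $dx_j \wedge d\hat{x}_i = (-1)^{i}\delta_{ij}\,dx$, which collapses the double sum to a single index. After the signs $(-1)^{i}(-1)^{i}=1$ cancel out, the right-hand side becomes
\begin{equation*}
\sum_{i}(\partial_{x_i}f)\,e_i\,g\,dx + \sum_{i} f\,e_i\,(\partial_{x_i}g)\,dx = (f\overline{D})g\,dx + f(\overline{D}g)\,dx,
\end{equation*}
where the first sum is $(f\overline{D})g$ because $\overline{D}$ acts on $f$ from the right (so the $e_i$ sits to the right of the derivative factor), and the second sum is $f(\overline{D}g)$ because $\overline{D}$ acts on $g$ from the left. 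Applying the classical Stokes theorem $\int_{\partial\Omega}\omega = \int_{\Omega}d\omega$ to the $n$-form $\omega = f\,d\sigma\,g$ then yields the first identity.

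The second identity is obtained by an identical calculation with $d\sigma$ replaced by $d\bar{\sigma} = \sum_{i}(-1)^{i}\bar{e}_i\,d\hat{x}_i$, which produces $\bar{e}_i$ in each term and thus assembles into the operator $D = \sum_i \bar{e}_i\partial_{x_i}$ rather than $\overline{D}$. No hypothesis of monogenicity is used — the computation is purely algebraic combined with the classical Stokes theorem on the $C^1$-smooth $(n{+}1)$-chain $\Omega\subset M$.

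The only delicate point is the bookkeeping of the non-commutative multiplication together with the sign coming from $dx_i\wedge d\hat{x}_i$; in particular, one must verify that the Leibniz expansion is carried out \emph{in place} (so that the generator $e_i$ or $\bar{e}_i$ stays sandwiched between $f$ and $g$), which ultimately recovers the asymmetric form $(f\overline{D})g + f(\overline{D}g)$ rather than a symmetric divergence. This is the main obstacle to presentation, though not to substance; once the ordering convention is fixed, the rest is essentially a one-line computation.
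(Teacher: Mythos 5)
Your argument is correct and is precisely the standard proof: the identity $dx_j\wedge d\hat{x}_i=(-1)^i\delta_{ij}\,dx$ together with the in-place Leibniz expansion of $\partial_{x_i}(fe_ig)$ and the classical Stokes theorem is exactly how this is established in the cited reference (Brackx--Delanghe--Sommen); the paper itself offers no proof, only the citation. Nothing is missing.
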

\begin{rem}
Denote $C^\infty_0(\Omega,\R)$ as the set of all smooth real-valued functions with compact support in $\Omega$ and $C^\infty_0(\Omega,\A):=\{f|f:\Omega\rightarrow \A,~f=\sum\limits_Af_Ae_A,~f_A\in C^\infty_0(\Omega,\R)\}.$ If $f$ or $g\in C^\infty_0(\Omega,\A)$, then we have from the Stokes-Green theorem that
$$\int_\Omega(f\overline{D})gdx=-\int_\Omega f(\overline{D}g)dx,$$
$$\int_\Omega(fD)gdx=-\int_\Omega f(Dg)dx.$$
\end{rem}
\begin{lem}
If $u(x)\in C^1(\Omega,\A)$, then $\overline{\overline{D}u}=\bar{u}D$.
\end{lem}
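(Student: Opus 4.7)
The plan is a direct, symbol-pushing verification, leaning on three ingredients already in the excerpt: the definitions of $\overline D$ and $D$, the order-reversing property $\overline{\lambda\mu}=\bar\mu\bar\lambda$ of the involution on $\A$, and the fact that involution is $\R$-linear on coefficients so that it commutes with real partial differentiation.

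First I would expand $\overline Du$ according to its definition and then apply the bar:
\[
\overline{\overline Du}=\overline{\sum_{i=0}^{n}e_i\,\partial_{x_i}u}=\sum_{i=0}^{n}\overline{e_i\,\partial_{x_i}u}.
\]
Since $\partial_{x_i}u$ is again an element of $\A$, I may apply the product rule for conjugation term by term, obtaining $\overline{e_i\,\partial_{x_i}u}=\overline{\partial_{x_i}u}\,\bar e_i$.

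Next I would justify that $\overline{\partial_{x_i}u}=\partial_{x_i}\bar u$. Writing $u=\sum_A u_A e_A$ with real-valued $u_A$, the definition $\bar a=\sum_A x_A\bar e_A$ shows that involution is a real-linear operator on $\A$ whose action depends only on the basis elements, not on $x$. Hence it commutes with $\partial_{x_i}$:
\[
\overline{\partial_{x_i}u}=\overline{\sum_A(\partial_{x_i}u_A)e_A}=\sum_A(\partial_{x_i}u_A)\bar e_A=\partial_{x_i}\sum_A u_A\bar e_A=\partial_{x_i}\bar u.
\]
Substituting back gives
\[
\overline{\overline Du}=\sum_{i=0}^{n}(\partial_{x_i}\bar u)\bar e_i,
\]
which is precisely $\bar uD$ according to the right-action formula $fD=\sum_{i,A}e_A\bar e_i\partial_{x_i}f_A$ applied to $f=\bar u$ (together with the reshuffling of the summation over $A$ into $\bar u$ itself).

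There is no real obstacle here; the only point worth flagging explicitly is the commutation of involution with $\partial_{x_i}$, which follows from the fact that differentiation acts on the scalar coefficients $u_A$ while the involution acts on the basis elements $e_A$. Once that is recorded, the identity drops out of a two-line calculation.
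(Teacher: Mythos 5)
Your proof is correct and follows essentially the same route as the paper's: a direct expansion of $\overline{\overline{D}u}$ using the anti-multiplicativity $\overline{\lambda\mu}=\bar\mu\bar\lambda$ together with the fact that the involution acts only on the basis elements and hence commutes with $\partial_{x_i}$. The paper carries out the same computation in one line at the level of components $\sum_{i,A}\overline{e_ie_A}\,\partial_{x_i}u_A=\sum_{i,A}\bar e_A\bar e_i\,\partial_{x_i}u_A=\bar uD$; your version merely makes the commutation step explicit.
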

\begin{proof}
Let $u(x)=\sum_{A}e_Au_A$. Then
\begin{equation} \label{eq:4}
\begin{split}
\overline{\overline{D}u}=\sum_{i,A}\overline{e_ie_A}\partial_{x_i}u_A
=\sum_{i,A}\bar{e}_A\bar{e}_i\partial_{x_i}u_A
=\bar{u}D.
 \end{split}\nonumber
 \end{equation}
\end{proof}

\begin{lem}\cite{c2}
If $u(x)=\sum_{A}e_Au_A$, $v(x)=\sum_{i=0}^{n}e_iv_i$, then
$$\overline{D}(uv)=(\overline{D}u)v+u(\overline{D}v)+\sum\limits^n_{j=1}(e_ju-ue_j)\partial_{x_j} v.$$
\end{lem}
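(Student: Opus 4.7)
The plan is a direct computation: expand $\overline{D}(uv)$ by applying the ordinary product rule to each scalar coefficient, then regroup the resulting terms using associativity of Clifford multiplication. Since $u$ and the basis vector $e_i$ do not commute inside $\A$, a correction term of the form $(e_i u-u e_i)\partial_{x_i}v$ appears naturally when one tries to move $u$ past $e_i$. A final cosmetic step uses that $e_0=1$ lies in the center of $\A$, which kills the $i=0$ contribution and produces precisely the sum $\sum_{j=1}^{n}$ appearing in the statement.

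Concretely, I would start from
$$\overline{D}(uv)=\sum_{i=0}^{n}e_i\,\partial_{x_i}(uv)=\sum_{i=0}^{n}e_i(\partial_{x_i}u)\,v+\sum_{i=0}^{n}e_i\,u\,(\partial_{x_i}v),$$
recognize the first sum as $(\overline{D}u)v$ by the definition of $\overline{D}$, and then rewrite the second sum by adding and subtracting $u\,e_i\,\partial_{x_i}v$ to obtain
$$\sum_{i=0}^{n}e_i\,u\,(\partial_{x_i}v)=\sum_{i=0}^{n}u\,e_i\,(\partial_{x_i}v)+\sum_{i=0}^{n}(e_i u-u e_i)\,\partial_{x_i}v.$$
Associativity of the Clifford product identifies the first piece on the right with $u(\overline{D}v)$. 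Because $e_0=1$ is central in $\A$, the $i=0$ summand of the commutator term vanishes, leaving exactly $\sum_{j=1}^{n}(e_j u-u e_j)\,\partial_{x_j}v$, which completes the identity.

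There is essentially no obstacle here; the argument is a bookkeeping exercise in a noncommutative algebra. The only point that requires genuine care is preserving the left-to-right order of factors at every step, since for example $u(\overline{D}v)=\sum_{i}(ue_i)\,\partial_{x_i}v$ is not in general equal to $\sum_i e_i\,u\,\partial_{x_i}v$. As a minor remark, the particular form $v=\sum_{i=0}^{n}e_i v_i$ hypothesized in the statement is never actually invoked in the proof sketched above, so the identity in fact holds for arbitrary $v\in C^1(\Omega,\A)$; the stated hypothesis is presumably kept only to match the intended application later in the paper.
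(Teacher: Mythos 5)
Your computation is correct: expanding $\overline{D}(uv)$ by the Leibniz rule on the real coefficients, identifying $\sum_i e_i(\partial_{x_i}u)v=(\overline{D}u)v$, and adding and subtracting $ue_i\partial_{x_i}v$ to produce the commutator term (which vanishes for $i=0$ since $e_0=1$ is central) is exactly the standard argument; the paper itself states this lemma with only a citation to the literature and gives no proof, so there is nothing to contrast with. Your side remark is also accurate: the hypothesis that $v$ be paravector-valued is never used, and the identity holds for any $v\in C^1(\Omega,\A)$.
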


\subsection{Weak solutions}

Let $L_{loc}^1(\Omega,\A):=\{f|f:\Omega\rightarrow \A, ~f=\sum\limits_Af_Ae_A,~f_A\in L_{loc}^1(\Omega,\R)\}$. Then we define the weak solution in the sense of Clifford algebra as follows.
\begin{defn}\label{defn1}{\bf ($\overline{D}$ solution in weak sense)}
If $f\in L_{loc}^1(\Omega,\A)$, $u:\Omega \rightarrow \A$ is a weak solution of
$$\overline{D}u=f ~(\mbox{or}~{D}u=f)$$
if for any $\alpha\in C^\infty_0(\Omega,\A)$,
$$\int_{\Omega}\alpha f dx=-\int_{\Omega}(\alpha\overline{D})udx~(\mbox{or}~\int_{\Omega}\alpha f dx=-\int_{\Omega}(\alpha {D})udx).$$
\end{defn}
It should be noticed that if $u$ is a weak solution of Dirac equation $\overline{D}u=0$, in addition, if $u$ is smooth in $\Omega$, then it is left-monogenic.
Now it is natural to give the definition of $\Delta$ solution in the weak sense.
\begin{defn}\label{defn1.1}{\bf ($\Delta$  solution in weak sense)}
If $f\in L_{loc}^1(\Omega,\A)$, $u:\Omega \rightarrow \A$ is a weak solution of
$$\Delta u=f$$
if for any $\alpha\in C^\infty_0(\Omega,\A)$,
$$\int_{\Omega}\alpha f dx=\int_{\Omega}({\Delta}\alpha)udx.$$
\end{defn}

\begin{thm}\label{thm4}
If $f\in L_{loc}^1(\Omega,\A)$, and  $\overline{D}f=0$ in weak sense, then $f$ is left-monogenic  at any point of $\Omega$.
\end{thm}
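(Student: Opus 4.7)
The plan is to establish a Weyl-type regularity lemma for $\overline{D}$ by reducing to the classical regularity of harmonic functions, exploiting the factorization $\overline{D}D=D\overline{D}=\Delta$. The natural tool is mollification with a scalar radial kernel, which commutes with the Clifford structure trivially.

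First I would fix a standard nonnegative radial mollifier $\rho_\epsilon\in C^\infty_0(\R^{n+1},\R)$ with $\mathrm{supp}\,\rho_\epsilon\subset B(0,\epsilon)$ and $\int\rho_\epsilon=1$, and set $f_\epsilon(x):=\int_\Omega\rho_\epsilon(x-y)f(y)\,dy$ on $\Omega_\epsilon:=\{x\in\Omega:\mathrm{dist}(x,\partial\Omega)>\epsilon\}$. Then $f_\epsilon\in C^\infty(\Omega_\epsilon,\A)$ and $f_\epsilon\to f$ in $L^1_{loc}(\Omega,\A)$. The first key step is to show $\overline{D}f_\epsilon=0$ \emph{classically} on $\Omega_\epsilon$: for fixed $x\in\Omega_\epsilon$, use $\alpha(y):=\rho_\epsilon(x-y)\in C^\infty_0(\Omega,\A)$ as a test function. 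Differentiating under the integral and using $\partial_{x_i}[\rho_\epsilon(x-y)]=-\partial_{y_i}[\rho_\epsilon(x-y)]$ together with the fact that $\alpha$ is scalar (so $\alpha\overline{D}=\overline{D}\alpha$), one obtains $\overline{D}_x f_\epsilon(x)=-\int_\Omega(\alpha\overline{D})(y)\,f(y)\,dy$, which vanishes by Definition \ref{defn1}.

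Since $f_\epsilon$ is smooth and $\overline{D}f_\epsilon=0$, applying $D$ gives $\Delta f_\epsilon=D\overline{D}f_\epsilon=0$, so $f_\epsilon$ is componentwise harmonic on $\Omega_\epsilon$. Now I would invoke the mean value property: because $\rho_\delta$ is radial, a polar-coordinate computation yields $(f_\epsilon*\rho_\delta)(x)=f_\epsilon(x)$ for $x\in\Omega_{\epsilon+\delta}$. Combined with the symmetry $f_\epsilon*\rho_\delta=f*\rho_\epsilon*\rho_\delta=f_\delta*\rho_\epsilon=f_\delta$, this forces $f_\epsilon\equiv f_\delta$ on $\Omega_{\epsilon+\delta}$ for all sufficiently small $\epsilon,\delta$. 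Letting $\delta\to 0$ and using $f_\delta\to f$ in $L^1_{loc}$ along a subsequence converging a.e., one gets $f=f_\epsilon$ almost everywhere on $\Omega_\epsilon$. Since the $f_\epsilon$ are mutually consistent on overlaps, they glue to a single smooth function $\tilde f$ on $\Omega$ with $\tilde f=f$ a.e.\ and $\overline{D}\tilde f=0$ pointwise, i.e., $f$ is left-monogenic.

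The main obstacle is really just a careful bookkeeping obstacle rather than a conceptual one: one must verify that the weak formulation from Definition \ref{defn1}, which is written with right-action $\alpha\overline{D}$, correctly produces the left-action Dirac derivative of $f_\epsilon$. This reduction hinges on choosing the test function $\alpha$ scalar-valued so that left and right actions of $\overline{D}$ coincide, which is exactly what the scalar mollifier provides. Beyond this, the regularization argument is the standard Weyl-lemma proof lifted componentwise to $\A$, with no additional difficulty coming from the noncommutative Clifford structure.
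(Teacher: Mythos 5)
Your proof is correct, and its core idea coincides with the paper's: exploit the factorization $D\overline{D}=\Delta$ to reduce regularity of weak solutions of $\overline{D}f=0$ to harmonicity. The difference is in execution. The paper's proof is three lines: it asserts that $\overline{D}f=0$ weakly implies $\Delta f=0$ weakly, invokes Weyl's lemma as a black box, and then says ``of course $f$ is left-monogenic.'' You instead unpack Weyl's lemma by hand via mollification, and you apply the weak equation directly to the scalar kernel $\alpha(y)=\rho_\epsilon(x-y)$ to get $\overline{D}f_\epsilon=0$ \emph{classically} before ever passing to $\Delta$. This buys you two things the paper glosses over: (i) you never need the slightly delicate verification that $(\beta D)\overline{D}=\beta\Delta$ in the right-action sense for Clifford-valued test functions (which is what the paper's first assertion secretly requires), because you only apply $D$ to the already-smooth $f_\epsilon$; and (ii) your conclusion $\overline{D}\tilde f=0$ pointwise comes for free from $f=f_\epsilon$ a.e., whereas the paper's final step really needs the additional remark that a smooth weak solution is a classical one (harmonicity alone does not imply left-monogenicity). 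Your observation that the mollifier must be scalar-valued so that the left and right actions of $\overline{D}$ on it agree is exactly the point where the noncommutativity could bite, and you handle it correctly. The only cosmetic caveat is the usual domain bookkeeping for associativity of convolutions of functions defined only on $\Omega$, which you have confined to $\Omega_{\epsilon+\delta}$ as one should.
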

\begin{proof}: Since $\overline{D}f=0$ in weak sense, then $\Delta f=0$ in weak sense. By Weyl's lemma, $f$ is smooth in $\Omega$ and has
$\Delta f=0$ in classical sense, then of course $f$ is left-monogenic at any point of $\Omega$.
\end{proof}
\begin{rem}\label{rem1}
This is useful to deal with uniqueness of weak solutions.
for example, if $ u,~ v\in L_{loc}^1(\Omega,\A)$ are two weak solutions of $\overline{D }u=f$, then $ u=v+w$ with any $w$ left-monogenic.
\end{rem}

\begin{rem}
An important example of a left monogenic function  is the generalized Cauchy kernel $$G(x)=\frac{1}{\omega_{n+1}}\frac{\overline{x}}{|x|^{n+1}},$$
where $\omega_{n+1}$ denotes the surface area of the unit ball in $\R^{n+1}$. This function obviously belongs to $L_{loc}^1(\Omega,\A)$ and is a fundamental solution of the Dirac equation in the classical sense at any point of $\R^{n+1}$ except 0. However, it is not a weak solution of the Dirac operator. In fact, if it satisfies $\overline{D}f=0$ in the weak sense, then from Theorem \ref{thm4}, it must
be left-monogenic in the any point of $\Omega$ which could include $0$. Therefore, we get a contradiction.
\end{rem}

%if u, v are two weak solutions of $\bar$ D u=f in weak sense, then u=v+cliford analytic

For $f\in L^2(\Omega,\A,\varphi)$, $u:\Omega \rightarrow \A$. If $\overline{D}u=f$, based on the Stokes-Green theorem, we can define the dual operator $\overline{D}^*_\varphi$ of $\overline{D}$ under the inner product of $L^2(\Omega,\A,\varphi)$. For any  $\alpha\in C^\infty_0(\Omega,\A)$,
\begin{equation}\label{7}
\begin{split}
(\alpha,f)_\varphi=&~\int_\Omega\bar{\alpha}fe^{-\varphi}dx=\int_\Omega\bar{\alpha}e^{-\varphi}fdx\\
=&~\int_\Omega(\bar{\alpha}e^{-\varphi})(\overline{D}u)dx\\
=&~-\int_\Omega\big((\bar{\alpha}e^{-\varphi})\overline{D}\big)udx\\
=&~-\int_\Omega\big((\bar{\alpha}e^{-\varphi})\overline{D}\big)e^\varphi ue^{-\varphi}dx\\
=&~\int_\Omega\overline{-e^{\varphi}D(\alpha e^{-\varphi})}ue^{-\varphi}dx\\
=&~(-e^{-\varphi}D(\alpha e^{-\varphi}),u)_\varphi\triangleq(\overline{D}^*_\varphi\alpha,u)_\varphi,
\end{split}
\end{equation}
where $\overline{D}^*_\varphi\alpha=-e^\varphi D(\alpha e^{-\varphi})=\alpha (D\varphi)-D\alpha$,
i.e. $$(\alpha,\overline{D}u)_\varphi=(\overline{D}^*_\varphi\alpha,u)_\varphi.$$ In the same way, we also have
$$(\overline{D}u,\alpha)_\varphi=(u,\overline{D}^*_\varphi\alpha)_\varphi.$$

\section{The proof of Theorem \ref{thm1}}
Now we are in the position of proving Theorem \ref{thm1}.
\begin{proof}($Sufficiency$) From the definition of dual operator and Cauchy-Schwarz inequality in Proposition \ref{prop1}, we have
\begin{equation}
\begin{split}
|(f,\alpha)_\varphi|^2_0
=&|(\overline{D}u,\alpha)_\varphi|^2_0
=|(u,\overline{D}^*_\varphi\alpha)_\varphi|^2_0\\
\leq&~\|u\|^2\cdot\|\overline{D}^*_\varphi\alpha\|^2\\
\leq&~c\cdot\|\overline{D}^*_\varphi\alpha\|^2.\nonumber
\end{split}
\end{equation}
~\\
($necessity$) We aim to prove the necessity with Riesz representation theorem. First, we denote the submodule
$$E=\{\overline{D}^*_\varphi\alpha,~\alpha\in C^\infty_0(\Omega,\A),~\varphi\in C^2(\Omega,\R)\}\subset L^2(\Omega,\A,\varphi).$$
Then we define a linear functional $L_f$ on $E$, i.e., $L_f\in E^{*alg}$ for a fixed $f\in  L^2(\Omega,\A,\varphi)$ as follows,
$$\langle L_f,\overline{D}^*_\varphi\alpha\rangle=(f,\alpha)_\varphi=\int_\Omega\bar{f}\cdot\alpha\cdot e^{-\varphi}dx\in \A.$$
From (\ref{eq:5}), we have
$$|\langle L_f,\overline{D}^*_\varphi\alpha\rangle|_0=|(f,\alpha)_\varphi|_0\leq\sqrt{c}\cdot\|\overline{D}^*_\varphi\alpha\|,$$
which meas that $L_f$ is a bounded functional from Definition \ref{bounded}. By the Hahn-Banach type theorem in Theorem \ref{Hahn}, $L_f$ can be extended to  a linear functional $\widetilde{L}_f$ on $L^2(\Omega,\A,\varphi)$, and with
\begin{equation}\label{eq:6}
\begin{split}|\langle \widetilde{L}_f,g\rangle|_0\leq\sqrt{c^*}\|g\|,~\forall g\in L^2(\Omega,\A,\varphi),\end{split}
\end{equation}
where $\sqrt{c^*}=\sqrt{c}\cdot|e_0|_0$, {since} $|e_A|_0=2^{n/2}$, then $c^*=2^{n}c$ from \cite{c}. Now we are in the position to use the Riesz representation theorem for the operator $\widetilde{L}_f$. From Theorem \ref{Riesz}, there exists a $u\in L^2(\Omega,\A,\varphi)$ such that
\begin{equation}\label{eq:7}
\begin{split}\langle \widetilde{L}_f,g\rangle=(u,g)_\varphi,~\forall g\in L^2(\Omega,\A,\varphi).\end{split}
\end{equation}

For $\forall \alpha\in C^\infty_0(\Omega,\A)$, let $g=\overline{D}^*_\varphi\alpha$. Then
\begin{equation}\label{}
\begin{split}
(f,\alpha)_\varphi=&\langle  \widetilde{L}_f,\overline{D}^*_\varphi\alpha\rangle =(u,\overline{D}^*_\varphi\alpha)_\varphi=(\overline{D}u,\alpha)_\varphi,\nonumber
\end{split}
\end{equation}
which deduces that
$$\int_\Omega\bar{f}\alpha e^{-\varphi}dx=\int_\Omega\overline{(\overline{D}u)}{\alpha} e^{-\varphi}dx.$$ Conjugating both sides of above equation leads to
$$\int_\Omega\bar{\alpha}f \cdot e^{-\varphi}dx=\int_\Omega\bar{\alpha} (\overline{D})u  e^{-\varphi}dx.$$
Let $\alpha=\bar{\alpha}e^{\varphi}$,   it can be obtained that
$$\int_\Omega\alpha  fdx=\int_\Omega\alpha (\overline{D}u)dx,~\forall \alpha\in C^\infty_0(\Omega,\A).$$
Therefore, $$\overline{D}u=f$$ is proved from the definition of weak solutions.

Next, we give the bound for the norm of $u$. Let $g=u=\sum_{A}e_Au_A\in L^2(\Omega,\A,\varphi)$, from (\ref{eq:6}) and (\ref{eq:7}), we get that
\begin{equation}\label{eq:8}
\begin{split}|(u,u)_\varphi|_0\leq\sqrt{c^*}\|u\|.\end{split}
\end{equation}
On the other hand,
\begin{equation}
\begin{split}
|(u,u)_\varphi|_0^2=&\big|\int_\Omega\bar{u}ue^{-\varphi}dx\big|^2_0\\
=&~2^n\cdot\big[\int_\Omega\bar{u}ue^{-\varphi}dx\cdot\overline{\int_\Omega\bar{u}ue^{-\varphi}dx}\big]_0\\
=&~2^n\big[\int_\Omega(\sum\limits_Au^2_A+\sum\limits_{A\neq B}\bar{e}_Ae_Bu_Au_B)e^{-\varphi}dx\cdot\overline{\int_\Omega(\sum\limits_Au^2_A+\sum\limits_{A\neq B}\bar{e}_Ae_Bu_Au_B)e^{-\varphi}dx}\big]_0\\
=&~2^n\big[(\int_\Omega\sum\limits_Au^2_Ae^{-\varphi}dx)^2+(\int_\Omega\sum\limits_{A\neq B}u_Au_Be^{-\varphi}dx)^2\big],
\end{split}\nonumber
\end{equation}
and
\begin{equation}
\begin{split}
\|u\|^2=&~\int_\Omega|u|^2_0e^{-\varphi}dx
=2^n\int_\Omega[\bar{u}u]_0e^{-\varphi}dx
=2^n\int_\Omega\sum\limits_Au^2_A\cdot e^{-\varphi}dx
\end{split}\nonumber
\end{equation}
So we have $\|u\|^4=2^{2n}\cdot(\int_\Omega\sum\limits_Au^2_A\cdot e^{-\varphi}dx)^2$. Hence,
$$|(u,u)_\varphi|_0^2=2^n[(\int_\Omega\sum\limits_Au^2_A\cdot e^{-\varphi}dx)^2+(\int_\Omega\sum\limits_{A\neq B}u_Au_Be^{-\varphi}dx)^2]\geq 2^{-n}\|u\|^4.$$
Combining with (\ref{eq:8}), it is obtained that
$$\|u\|^2\leq 2^{n/2}|(u,u)_\varphi|_0\leq2^{n/2}\sqrt{c^*}\|u\|,$$ and
$$\|u\|^2\leq 2^{2n} {c}.$$
The proof is completed.
\end{proof}

\section{The proof of Theorem \ref{thm2}}
It should be noticed that inequality (\ref{eq:5}) in Theorem \ref{thm1} is related with $\alpha\in C^\infty_0(\Omega,\A)$. In the following, we will give another sufficient condition that has nothing to do with the space $C^\infty_0(\Omega,\A)$. First, we need to compute the norm of $\|\overline{D}^*_\varphi\alpha\|$ for any $\alpha\in C^\infty_0(\Omega,\A).$

\begin{equation}
\begin{split}
\|\overline{D}^*_\varphi\alpha\|^2=&\int_\Omega|\overline{D}^*_\varphi\alpha|^2_0e^{-\varphi}dx\\
=&\int_\Omega\langle\tau_{e_0},\overline{\overline{D}^*_\varphi\alpha}\cdot\overline{D}^*_\varphi\alpha\rangle e^{-\varphi}dx\\
=&\langle\tau_{e_0},\int_\Omega\overline{\overline{D}^*_\varphi\alpha}\cdot\overline{D}^*_\varphi\alpha e^{-\varphi}dx\rangle\\
=&\langle\tau_{e_0},(\overline{D}^*_\varphi\alpha,\overline{D}^*_\varphi\alpha)_\varphi\rangle\\
=&\langle\tau_{e_0},(\alpha,\overline{D}\overline{D}^*_\varphi\alpha)_\varphi\rangle\\
=&\langle\tau_{e_0},(\alpha,\overline{D}(\alpha (D\varphi)-D\alpha))_\varphi\rangle\\
=&\langle\tau_{e_0},(\alpha,\overline{D}\alpha (D\varphi)+\alpha\Delta\varphi-\Delta\alpha+\sum\limits^n_{j=1}(e_j\alpha-\alpha e_j)\frac{\partial}{\partial x_j}(D\varphi))_\varphi\rangle\\
=&\langle\tau_{e_0},(\alpha,\overline{D}^*_\varphi(\overline{D}\alpha)+\alpha\Delta\varphi+\sum\limits^n_{j=1}(e_j\alpha-\alpha e_j)\frac{\partial}{\partial x_j}(D\varphi))_\varphi\rangle\\
=&\langle\tau_{e_0},(\alpha,\overline{D}^*_\varphi(\overline{D}\alpha))_\varphi+(\alpha,\alpha\Delta\varphi)_\varphi+(\alpha,\sum\limits^n_{j=1}(e_j\alpha-\alpha e_j)\frac{\partial}{\partial x_j}(D\varphi))_\varphi\rangle\\
=&\langle\tau_{e_0},(\alpha,\overline{D}^*_\varphi(\overline{D}\alpha))_\varphi\rangle+\langle\tau_{e_0},(\alpha,\alpha\Delta\varphi)_\varphi\rangle+\langle\tau_{e_0},(\alpha,\sum\limits^n_{j=1}(e_j\alpha-\alpha e_j)\frac{\partial}{\partial x_j}(D\varphi))_\varphi\rangle\\
=&I_1+I_2+I_3,\nonumber
\end{split}
\end{equation}
where
\begin{equation}
\begin{split}
I_1=&\langle\tau_{e_0},(\alpha,\overline{D}^*_\varphi(\overline{D}\alpha))_\varphi\rangle=\langle\tau_{e_0},(\overline{D}\alpha,\overline{D}\alpha)_\varphi\rangle=\|\overline{D}\alpha\|^2,\\
I_2=&\langle\tau_{e_0},(\alpha,\alpha\Delta\varphi)_\varphi\rangle=\int_{\Omega}|\alpha|^2_0\Delta\varphi e^{-\varphi}dx, \nonumber
\end{split}
\end{equation}
and
\begin{equation}
\begin{split}
I_3=&\langle\tau_{e_0},(\alpha,\sum\limits^n_{j=1}(e_j\alpha-\alpha e_j)\frac{\partial}{\partial x_j}(D\varphi))_\varphi\rangle\\
=&\langle\tau_{e_0},(\alpha,\sum\limits^n_{j=1}(e_j\alpha-\alpha e_j)\frac{\partial}{\partial x_j}(\sum_{i=0}^{n}\bar{e}_i\frac{\partial \varphi}{\partial x_i}))_\varphi\rangle\\
=&\langle\tau_{e_0},(\alpha,\sum\limits^n_{j=1}\sum_{i=0}^{n}(e_j\alpha \bar{e}_i-\alpha e_j \bar{e}_i)\frac{\partial^2 \varphi}{\partial x_j\partial x_i})_\varphi\rangle\\
=&\langle\tau_{e_0},\int_\Omega \bar{\alpha}\sum\limits^n_{j=1}\sum_{i=0}^{n}(e_j\alpha \bar{e}_i-\alpha e_j \bar{e}_i)\frac{\partial^2 \varphi}{\partial x_j\partial x_i} e^{-\varphi}dx\rangle\\
=&\int_\Omega\langle\tau_{e_0}, \bar{\alpha}\sum\limits^n_{j=1}\sum_{i=0}^{n}(e_j\alpha \bar{e}_i-\alpha e_j \bar{e}_i)\frac{\partial^2 \varphi}{\partial x_j\partial x_i}\rangle e^{-\varphi}dx.
\end{split}\nonumber
\end{equation}
{\bf It should be noticed that if $n=1$, i.e., the space $\R^2$ is considered, then $I_3=0.$}

Since for $1\leq i,j\leq n$ and $i\neq j$, $e_j \bar{e}_i=-e_j {e}_i=e_i{e}_j=- {e}_i\bar{e}_j$. For simplicity, let
\begin{equation}
\begin{split}
I_4=&\langle\tau_{e_0},\bar{\alpha}\sum\limits^n_{j=1}\sum\limits_{i=0}^{n}(e_j\alpha \bar{e}_i-\alpha e_j \bar{e}_i)\frac{\partial^2 \varphi}{\partial x_j\partial x_i}\rangle\\
=&\langle\tau_{e_0},\sum\limits^n_{j=1}\sum\limits_{i=1}^{n}(\bar{\alpha}e_j\alpha \bar{e}_i-\bar{\alpha}\alpha e_j \bar{e}_i)\frac{\partial^2 \varphi}{\partial x_j\partial x_i}\rangle+\langle\tau_{e_0},\sum\limits^n_{j=1}(\bar{\alpha}e_j\alpha \bar{e}_0-\bar{\alpha}\alpha e_j \bar{e}_0)\frac{\partial^2 \varphi}{\partial x_j\partial x_0}\rangle\\
=&\langle\tau_{e_0},\sum\limits_{i=1}^{n}(\bar{\alpha}e_i\alpha \bar{e}_i-\bar{\alpha}\alpha e_i \bar{e}_i)\frac{\partial^2 \varphi}{\partial x^2_i}\rangle+\langle\tau_{e_0},\sum\limits^n_{j\neq i}(\bar{\alpha}e_j\alpha \bar{e}_i)\frac{\partial^2 \varphi}{\partial x_j\partial x_i}\rangle\\
&+\langle\tau_{e_0},\sum\limits^n_{j=1}(\bar{\alpha}e_j\alpha \bar{e}_0-\bar{\alpha}\alpha e_j \bar{e}_0)\frac{\partial^2 \varphi}{\partial x_j\partial x_0}\rangle\\
=&\langle\tau_{e_0},\sum\limits_{i=1}^{n}(\bar{\alpha}e_i\alpha \bar{e}_i-\bar{\alpha}\alpha )\frac{\partial^2 \varphi}{\partial x^2_i}\rangle+\langle\tau_{e_0},\sum\limits^n_{j\neq i}(\bar{\alpha}e_j\alpha \bar{e}_i)\frac{\partial^2 \varphi}{\partial x_j\partial x_i}\rangle\\
&+\langle\tau_{e_0},\sum\limits^n_{j=1}(\bar{\alpha}e_j\alpha \bar{e}_0-\bar{\alpha}\alpha e_j \bar{e}_0)\frac{\partial^2 \varphi}{\partial x_j\partial x_0}\rangle\\
=&I_5+I_6+I_7.
\end{split}\nonumber
\end{equation}
Assume $\alpha=\sum\limits_A\alpha_Ae_A\in \A,~\bar{\alpha}=\sum\limits_A\alpha_A\bar{e}_A$, then for any $1\leq i\leq n,$
\begin{equation}
\begin{split}
\bar{\alpha}e_i\alpha\bar{e}_i=&~\sum\limits_A\alpha_A\bar{e}_Ae_i\cdot\sum\limits_A\alpha_Ae_A\bar{e}_i\\
=&~\sum\limits_A(-1)^{\frac{|A|(|A|+1)}{2}}\alpha_Ae_Ae_i\cdot\sum\limits_A(-1)\alpha_Ae_Ae_i
\end{split}\nonumber
\end{equation}
Therefore
\begin{equation}
\begin{split}
I_5=&\langle\tau_{e_0},\sum\limits_{i=1}^{n}(\bar{\alpha}e_i\alpha \bar{e}_i-\bar{\alpha}\alpha )\frac{\partial^2 \varphi}{\partial x^2_i}\rangle\\
=&\langle\tau_{e_0},\sum\limits_{i=1}^{n}(\bar{\alpha}e_i\alpha \bar{e}_i)\frac{\partial^2 \varphi}{\partial x^2_i}\rangle-\langle\tau_{e_0},\sum\limits_{i=1}^{n}(\bar{\alpha}\alpha )\frac{\partial^2 \varphi}{\partial x^2_i}\rangle\\
=&\langle\tau_{e_0},\sum\limits_{i=1}^{n}(\sum\limits_A(-1)^{\frac{|A|(|A|+1)}{2}}\alpha_Ae_Ae_i\cdot\sum\limits_A(-1)\alpha_Ae_Ae_i)\frac{\partial^2 \varphi}{\partial x^2_i}\rangle-\langle\tau_{e_0},\sum\limits_{i=1}^{n}(\bar{\alpha}\alpha )\frac{\partial^2 \varphi}{\partial x^2_i}\rangle\\
=&2^n\sum\limits_{i=1}^{n}(\sum\limits_A(-1)^{\frac{|A|(|A|+1)}{2}+1}\alpha_A^2 e_Ae_ie_Ae_i)\frac{\partial^2 \varphi}{\partial x^2_i}-\sum\limits_{i=1}^{n}|\alpha|^2_0\frac{\partial^2 \varphi}{\partial x^2_i}\\
=&2^n\sum\limits_{i=1}^{n}(\sum\limits_{i\not\in A}(-1)^{\frac{|A|(|A|+1)}{2}+1}\alpha^2_A\cdot\overline{e_Ae_i}\cdot e_Ae_i\cdot
(-1)^{\frac{(|A|+1)(|A|+2)}{2}}\\
&+\sum\limits_{i\in A}(-1)^{\frac{|A|(|A|+1)}{2}+1}\cdot\alpha^2_A\cdot\overline{e_{A-{i}}}\cdot
e_{A-{i}}\cdot(-1)^{\frac{(|A|-1)(|A|)}{2}})\frac{\partial^2 \varphi}{\partial x^2_i}-\sum\limits_{i=1}^{n}|\alpha|^2_0\frac{\partial^2 \varphi}{\partial x^2_i}\\
=&2^n\sum\limits_{i=1}^{n}(\sum\limits_{i\not\in A}(-1)^{\frac{|A|(|A|+1)}{2}+1+\frac{(|A|+1)(|A|+2)}{2}}\cdot\alpha^2_A\\
&+\sum\limits_{i\in A}(-1)^{\frac{|A|(|A|+1)}{2}+1+\frac{(|A|-1)(|A|)}{2}}\cdot\alpha^2_A)\frac{\partial^2 \varphi}{\partial x^2_i}-\sum\limits_{i=1}^{n}|\alpha|^2_0\frac{\partial^2 \varphi}{\partial x^2_i}\\
=&2^n\sum\limits_{i=1}^{n}(\sum\limits_{i\not\in A}(-1)^{|A|^2}\cdot\alpha^2_A+\sum\limits_{i\in A}(-1)^{|A|^2+1}\cdot\alpha^2_A)\frac{\partial^2 \varphi}{\partial x^2_i}-\sum\limits_{i=1}^{n}|\alpha|^2_0\frac{\partial^2 \varphi}{\partial x^2_i}\\
=&2^n\sum\limits_{i=1}^{n}(\sum\limits_{i\not\in A,|A|^2 ~\mbox{is odd}}(-2)\alpha^2_A+\sum\limits_{i\in A,|A|^2 ~\mbox{is even}}(-2)\alpha^2_A)\frac{\partial^2 \varphi}{\partial x^2_i}\\
=&-2^{n+1}\sum\limits_{i=1}^{n}(\sum\limits_{i\not\in A,|A|^2 ~\mbox{is odd}}\alpha^2_A+\sum\limits_{i\in A,|A|^2 ~\mbox{is even}}\alpha^2_A)\frac{\partial^2 \varphi}{\partial x^2_i}.
\end{split}
\end{equation}
To consider $I_7$, we first study $\bar{\alpha}e_j\alpha$ for any $1\leq j\leq n$. Without loss of generality, let $e_j=e_1,~\bar{\alpha}=\sum\limits_A\alpha_A\bar{e}_A,~
\alpha=\sum\limits_A\alpha_Ae_A$. Then $\bar{\alpha}e_1\alpha=(\sum\limits_A\alpha_A\bar{e}_A)e_1(\sum\limits_A\alpha_Ae_A)$.

When $e_A=e_1e_{h_2}e_{h_3}\cdots e_{h_r}$,~where $1<h_2<h_3<\cdots<h_r$ and $1<r\leq n.$
\begin{equation}\label{16}
\begin{split}
\alpha_A\bar{e}_Ae_1=&\alpha_{1h_2\cdots h_r}(-1)^{\frac{r(r+1)}{2}}\cdot e_1e_{h_2}e_{h_3}\cdots e_{h_r}\cdot e_1
\\=&\alpha_{1h_2\cdots h_r}(-1)^{\frac{r(r+1)}{2}+r}e_{h_2}e_{h_3}\cdots e_{h_r}\\
\alpha_Ae_Ae_1=&\alpha_{1h_2\cdots h_r}e_1e_{h_2}\cdots e_{h_r}\cdot e_1=\alpha_{1h_2\cdots h_r}(-1)^re_{h_2}\cdots e_{h_r}.
\end{split}
\end{equation}

When $e_A=e_1$,
\begin{equation}\label{165}
\begin{split}
\alpha_A\bar{e}_Ae_1=&\alpha_{1}\\
\alpha_Ae_Ae_1=&-\alpha_{1}.
\end{split}
\end{equation}

When $e_A=e_{h_2}e_{h_3}\cdots e_{h_r}$,~where $1<h_2<h_3<\cdots<h_r$ and $1<r\leq n.$
\begin{equation}\label{17}
\begin{split}
\alpha_A\bar{e}_Ae_1=&\alpha_{h_2\cdots h_r}(-1)^{\frac{(r-1)(r)}{2}}\cdot e_{h_2}e_{h_3}\cdots e_{h_r}\cdot e_1
\\=&\alpha_{h_2\cdots h_r}(-1)^{\frac{(r-1)(r)}{2}+r-1}e_1e_{h_2}\cdots e_{h_r}\\
\alpha_Ae_Ae_1=&\alpha_{h_2\cdots h_r}e_{h_2}\cdots e_{h_r}\cdot e_1=\alpha_{h_2\cdots h_r}(-1)^{r-1}e_1e_{h_2}\cdots e_{h_r}.
\end{split}
\end{equation}

When $e_A=e_0$,
\begin{equation}\label{175}
\begin{split}
\alpha_A\bar{e}_Ae_1=&\alpha_{0}e_1\\
\alpha_Ae_Ae_1=&\alpha_{0}e_1.
\end{split}
\end{equation}
To compute $I_7$, one needs to know the coefficient for $e_0$ of $\bar{\alpha}e_1\alpha-\bar{\alpha}\alpha e_1$. It means that we should find out the corresponding terms of $e_1e_{h_2}e_{h_3}\cdots e_{h_r}$ and $e_{h_2}\cdots e_{h_r}$ in $\bar{\alpha}e_1$ and $\alpha$, in $\bar{\alpha}$ and $\alpha e_1$.

{\bf Case a1.} For $\bar{\alpha}e_1\alpha$, from (\ref{17}), the corresponding terms of $e_1e_{h_2}e_{h_3}\cdots e_{h_r}$ with $1<h_2<h_3<\cdots<h_r$ and $1<r\leq n$ in $\bar{\alpha}e_1=(\sum\limits_A\alpha_A\bar{e}_A)e_1$ and $\alpha=\sum\limits_A\alpha_Ae_A$ are $\alpha_{h_2\cdots h_r}(-1)^{\frac{(r-1)(r)}{2}+r-1}e_1e_{h_2}\cdots e_{h_r}$ and $\alpha_{1h_2\cdots h_r}e_1e_{h_2}\cdots e_{h_r}$, respectively. Multiplying these terms leads to
\begin{equation}\label{18}
\begin{split}
(-1)&^{\frac{(r-1)(r)}{2}+r-1}e_1e_{h_2}\cdots e_{h_r}\cdot e_1e_{h_2}\cdots e_{h_r}\cdot\alpha_{1h_2\cdots h_r}\cdot\alpha_{h_2\cdots h_r}\\
=&~(-1)^{\frac{(r-1)(r)}{2}+r-1}(-1)^{\frac{(r)(r+1)}{2}}\cdot\overline{e_1\cdots e_{h_r}}\cdot e_1e_{h_2}\cdots e_{h_r} \cdot\alpha_{1h_2\cdots h_r}\alpha_{h_2\cdots h_r}\\
=&~(-1)^{\frac{(r)(r+1)}{2}+r-1+\frac{(r-1)(r)}{2}}\cdot\alpha_{1h_2\cdots h_r}\alpha_{h_2\cdots h_r}.
\end{split}
\end{equation}
On the other hand, for $\bar{\alpha}e_1\alpha$, from (\ref{16}), the corresponding terms of $e_{h_2}e_{h_3}\cdots e_{h_r}$ with $1<h_2<h_3<\cdots<h_r$ and $1<r\leq n$ in $\bar{\alpha}e_1$ and $\alpha$ are $\alpha_{1h_2\cdots h_r}(-1)^{\frac{r(r+1)}{2}+r}e_{h_2}e_{h_3}\cdots e_{h_r}$ and $\alpha_{h_2\cdots h_r}e_{h_2}\cdots e_{h_r}$, respectively. Multiplying these terms leads to
\begin{equation}\label{19}
\begin{split}
(-1)&^{\frac{(r)(r+1)}{2}+r}e_{h_2\cdots h_r}\cdot e_{h_2\cdots h_r}\cdot\alpha_{1h_2\cdots h_r}
\cdot\alpha_{h_2\cdots h_r}\\
=&~(-1)^{\frac{(r)(r+1)}{2}+r}(-1)^{\frac{(r-1)(r)}{2}}\cdot\overline{e_{h_2\cdots h_r}}\cdot e_{h_2\cdots h_r} \cdot\alpha_{1h_2\cdots h_r}\alpha_{h_2\cdots h_r}\\
=&~(-1)^{\frac{(r)(r+1)}{2}+r+\frac{(r-1)(r)}{2}}\cdot\alpha_{1h_2\cdots h_r}\alpha_{h_2\cdots h_r}.
\end{split}
\end{equation}
From (\ref{18}) and (\ref{19}), these two terms vanish.

{\bf Case a2.} For $\bar{\alpha}e_1\alpha$, from (\ref{175}), the corresponding terms of $e_1$ in $\bar{\alpha}e_1$ and $\alpha$ are $\alpha_{0}e_1$ and $\alpha_{1}e_1$, respectively. Multiplying these terms leads to
\begin{equation}\label{185}
\begin{split}
\alpha_{0}e_1\alpha_{1}e_1=-\alpha_{0}\alpha_{1}.
\end{split}
\end{equation}
On the other hand, for $\bar{\alpha}e_1\alpha$, from (\ref{165}), the corresponding terms of $e_{0}$ in $\bar{\alpha}e_1$ and $\alpha$ are $\alpha_{1}$ and $\alpha_{0}$, respectively. Multiplying these terms leads to $\alpha_{0}\alpha_{1}$. Combining with (\ref{185}), these two terms also vanish.

From Cases a1 and a2, one can obtain that the coefficient for $e_0$ of $\bar{\alpha}e_1\alpha$ equals zero, i.e.,
\begin{equation}\label{23}
\begin{split}\langle\tau_{e_0},\sum\limits^n_{j=1}(\bar{\alpha}e_j\alpha \bar{e}_0)\frac{\partial^2 \varphi}{\partial x_j\partial x_0}\rangle=0.\end{split}
\end{equation}

%%%%%%%%%%%%
{\bf Case b1.} For $\bar{\alpha}\alpha e_1$, from (\ref{17}), the corresponding terms of $e_1e_{h_2}e_{h_3}\cdots e_{h_r}$ with $1<h_2<h_3<\cdots<h_r$ and $1<r\leq n$ in ${\alpha}e_1=(\sum\limits_A\alpha_A{e}_A)e_1$ and $\bar{\alpha}=\sum\limits_A\alpha_A\bar{e}_A$ are $\alpha_{h_2\cdots h_r}(-1)^{r-1}e_1e_{h_2}\cdots e_{h_r}$ and $\alpha_{1h_2\cdots h_r}\overline{e_1e_{h_2}\cdots e_{h_r}}$, respectively. Multiplying these terms leads to
\begin{equation}\label{25}
\begin{split}
(\alpha_{1h_2\cdots h_r}&\overline{e_1e_{h_2}\cdots e_{h_r}})\cdot(\alpha_{h_2\cdots h_r}e_{h_2}\cdots e_{h_r}\cdot e_1)\\
=&~(\alpha_{1h_2\cdots h_r}\overline{e_1e_{h_2}\cdots e_{h_r}})\cdot((-1)^{r-1}e_1e_{h_2}\cdots e_{h_r}\cdot \alpha_{h_2\cdots h_r})\\
=&~(-1)^{r-1}\alpha_{1h_2\cdots h_r}\cdot\alpha_{h_2\cdots h_r}.
\end{split}
\end{equation}
On the other hand, for $\bar{\alpha}\alpha e_1$, from (\ref{16}), the corresponding terms of $e_{h_2}e_{h_3}\cdots e_{h_r}$ with $1<h_2<h_3<\cdots<h_r$ and $1<r\leq n$ in ${\alpha}e_1$ and $\bar{\alpha}$ are $\alpha_{1h_2\cdots h_r}(-1)^re_{h_2}\cdots e_{h_r}$ and $\alpha_{h_2\cdots h_r}\overline{e_{h_2}\cdots e_{h_r}}$, respectively. Multiplying these terms leads to
\begin{equation}\label{26}
\begin{split}
(\alpha_{h_2\cdots h_r}&\overline{e_{h_2}\cdots e_{h_r}})\cdot(\alpha_{1h_2\cdots h_r}e_1\cdots e_{h_r}\cdot e_1)\\
=&~(\alpha_{h_2\cdots h_r}\overline{e_{h_2}\cdots e_{h_r}})\cdot((-1)^re_{h_2}\cdots e_{h_r}\cdot \alpha_{1h_2\cdots h_r})\\
=&~(-1)^r\alpha_{h_2\cdots h_r}\cdot\alpha_{1h_2\cdots h_r}.
\end{split}
\end{equation}
From (\ref{25}) and (\ref{26}), these two terms vanish.

{\bf Case b2.} For $\bar{\alpha}\alpha e_1$, from (\ref{175}), the corresponding terms of $e_1$ in ${\alpha}e_1$ and $\bar{\alpha}$ are $\alpha_{0}e_1$ and $\alpha_{1}\bar{e}_1$, respectively. Multiplying these terms leads to
\begin{equation}\label{1855}
\begin{split}
\alpha_{0}e_1\alpha_{1}\bar{e}_1=\alpha_{0}\alpha_{1}.
\end{split}
\end{equation}
On the other hand, for $\bar{\alpha}\alpha e_1$, from (\ref{165}), the corresponding terms of $e_{0}$ in ${\alpha}e_1$ and $\bar{\alpha}$ are $-\alpha_{1}$ and $\alpha_{0}$, respectively. Multiplying these terms leads to $-\alpha_{0}\alpha_{1}$. Combining with (\ref{1855}), these two terms also cancel.

From Cases b1 and b2, one can obtain that the coefficient for $e_0$ of $\bar{\alpha}e_1\alpha$ equals zero, i.e.,
\begin{equation}\label{24}
\begin{split}\langle\tau_{e_0},\sum\limits^n_{j=1}(\bar{\alpha}\alpha e_j\bar{e}_0)\frac{\partial^2 \varphi}{\partial x_j\partial x_0}\rangle=0.\end{split}
\end{equation}
{\bf Thus, $I_7=0$ from (\ref{23}) and (\ref{24}).}

To compute $I_6$, i.e., to get $[\bar{\alpha}e_i\alpha \bar{e}_j]_0$ for $i\neq j$, similar with the analysis of $I_7$, we should divide the vectors in $\bar{\alpha}e_i$ and $\alpha \bar{e}_j$ into four
cases.

{\bf Case c1.} $i\in A,~j\not\in A$ for $e_A$ in $\bar{\alpha}$ and $i\not\in B,~j\in B$ for $e_B$ in ${\alpha}$ with $A-{i}=B-{j}$.

For this case, firstly, we assume $e_A=e_{h_1\cdots h_{p(i)}\cdots h_r}$ and $h_{p(i)}=i$, $e_B=e_{h_1\cdots h_{p(j)}\cdots h_r}$ and $h_{p(j)}=j$.
We have
\begin{equation}\label{}
\begin{split}
\alpha_A\bar{e}_Ae_i=&\alpha_{A}(-1)^{\frac{r(r+1)}{2}}\cdot e_{h_1}\cdots e_i\cdots e_{h_r}\cdot e_i\\
=&\alpha_{A}(-1)^{\frac{r(r+1)}{2}+r-p(i)} e_{h_1}\cdots e_i^2\cdots e_{h_r},\\
=&\alpha_{A}(-1)^{\frac{r(r+1)}{2}+r-p(i)+1} e_{A-{i}},\\
\alpha_B{e}_B\bar{e}_j=&\alpha_{B} e_{h_1}\cdots e_j\cdots e_{h_r}\cdot \bar{e}_j\\
=&\alpha_{B}(-1)^{r-p(j)} e_{h_1}\cdots e_j\bar{e}_j\cdots e_{h_r},\\
=&\alpha_{B}(-1)^{r-p(j)} e_{B-{j}}.
\end{split}\nonumber
\end{equation}
Then
\begin{equation}\label{}
\begin{split}
\alpha_A\bar{e}_Ae_i\alpha_B{e}_B\bar{e}_j=&\alpha_{A}(-1)^{\frac{r(r+1)}{2}+r-p(i)+1} e_{A-{i}}\alpha_{B}(-1)^{r-p(j)} e_{B-{j}}\\
=&\alpha_{A}\alpha_{B}(-1)^{\frac{r(r+1)}{2}+r-p(i)+1+r-p(j)+\frac{r(r-1)}{2}} \overline{e_{A-{i}}} e_{B-{j}}\\
=&\alpha_{A}\alpha_{B}(-1)^{r^2+1-p(i)-p(j)}.
\end{split}
\end{equation}

{\bf Case c2.} $i\not\in A,~j\in A$ for $e_A$ in $\bar{\alpha}$ and $i\in B,~j\not\in B$ for $e_B$ in ${\alpha}$ with $A+{i}=B+{j}$.

We assume $e_A=e_{h_1\cdots h_{p(j)}\cdots h_r}$ and $h_{p(j)}=j$, $e_B=e_{h_1\cdots h_{p(i)}\cdots h_r}$ and $h_{p(i)}=i$.
We have
\begin{equation}\label{}
\begin{split}
\alpha_A\bar{e}_Ae_i=&\alpha_{A}(-1)^{\frac{r(r+1)}{2}}\cdot e_{h_1}\cdots e_j\cdots e_{h_r}\cdot e_i,\\
\alpha_B{e}_B\bar{e}_j=&\alpha_{B} e_{h_1}\cdots e_i\cdots e_{h_r}\cdot \bar{e}_j\\
=&-\alpha_{B} e_{h_1}\cdots e_i\cdots e_{h_r}\cdot{e}_j\\
=&\alpha_{B} e_{h_1}\cdots e_j\cdots e_{h_r}\cdot e_i.
\end{split}\nonumber
\end{equation}
Then
\begin{equation}\label{}
\begin{split}
\alpha_A\bar{e}_Ae_i\alpha_B{e}_B\bar{e}_j=&\alpha_{A}(-1)^{\frac{r(r+1)}{2}}\cdot e_{h_1}\cdots e_j\cdots e_{h_r}\cdot e_i\alpha_{B} e_{h_1}\cdots e_j\cdots e_{h_r}\cdot e_i\\
=&\alpha_{A}\alpha_{B}(-1)^{\frac{r(r+1)}{2}+\frac{(r+1)(r+2)}{2}} \overline{e_{h_1}\cdots e_j\cdots e_{h_r}\cdot e_i} e_{h_1}\cdots e_j\cdots e_{h_r}\cdot e_i\\
=&\alpha_{A}\alpha_{B}(-1)^{r^2+1}.
\end{split}\nonumber
\end{equation}

{\bf Case c3.} $i\in A,~j\in A$ for $e_A$ in $\bar{\alpha}$ and $i\not\in B,~j\not\in B$ for $e_B$ in ${\alpha}$ with $A-{i}=B+{j}$.

For this case, we assume $e_A=e_{h_1\cdots h_{p(i)}\cdots h_{p(j)}\cdots h_{r+2}}$ with $h_{p(i)}=i,~ h_{p(j)}=j$. Without loss of generality, we assume $i<j$. Furthermore, let $e_B=e_{h_1\cdots h_r}$.
We have
\begin{equation}\label{}
\begin{split}
\alpha_A\bar{e}_Ae_i=&\alpha_{A}(-1)^{\frac{(r+2)(r+3)}{2}}\cdot e_{h_1}\cdots e_i\cdots e_j\cdots e_{h_{r+2}}\cdot e_i\\
=&\alpha_{A}(-1)^{\frac{(r+2)(r+3)}{2}+r+2-h(i)}\cdot e_{h_1}\cdots e_j\cdots e_{h_{r+2}}\cdot e^2_i\\
=&\alpha_{A}(-1)^{\frac{(r+2)(r+3)}{2}+r+1-h(i)}\cdot e_{h_1}\cdots e_j\cdots e_{h_{r+2}}\\
=&\alpha_{A}(-1)^{\frac{(r+2)(r+3)}{2}+r+1-h(i)+r+2-h(j)}\cdot e_{h_1}\cdots e_{h_{r+2}}\cdot e_j,\\
\alpha_B{e}_B\bar{e}_j=&\alpha_{B} e_{h_1}\cdots e_{h_{r}}\cdot \bar{e}_j\\
=&-\alpha_{B} e_{h_1}\cdots e_{h_{r}}\cdot{e}_j.
\end{split}\nonumber
\end{equation}
Then
\begin{equation}\label{}
\begin{split}
\alpha_A\bar{e}_Ae_i\alpha_B{e}_B\bar{e}_j=&\alpha_{A}(-1)^{\frac{(r+2)(r+3)}{2}+r+1-h(i)+r+2-h(j)}\cdot e_{h_1}\cdots e_{h_{r+2}}\cdot e_j (-1)\alpha_{B} e_{h_1}\cdots e_{h_{r}}\cdot{e}_j \\
=&\alpha_{A}\alpha_{B} (-1)^{\frac{(r+2)(r+3)}{2}-h(i)-h(j)}\cdot e_{h_1}\cdots e_{h_{r+2}} \cdot e_j  e_{h_1}\cdots e_{h_r}\cdot e_j\\
=&\alpha_{A}\alpha_{B} (-1)^{\frac{(r+2)(r+3)}{2}-h(i)-h(j)+\frac{(r+1)(r+2)}{2}}\cdot \overline{e_{h_1}\cdots e_{h_{r+2}} \cdot e_j}  e_{h_1}\cdots e_{h_r}\cdot e_j\\
=&\alpha_{A}\alpha_{B} (-1)^{r^2-h(j)-h(i)}.
\end{split}\nonumber
\end{equation}

{\bf Case c4.} $i\not\in A,~j\not\in A$ for $e_A$ in $\bar{\alpha}$ and $i\in B,~j\in B$ for $e_B$ in ${\alpha}$ with $A+{i}=B-{j}$.

For this case, we assume $e_A=e_{h_1\cdots h_r}$, $e_B=e_{h_1\cdots h_{p(i)}\cdots h_{p(j)}\cdots h_{r+2}}$ with $h_{p(i)}=i,~ h_{p(j)}=j$ and $i<j$.
We have
\begin{equation}\label{}
\begin{split}
\alpha_A\bar{e}_Ae_i=&\alpha_{A}(-1)^{\frac{r(r+1)}{2}}\cdot e_{h_1}\cdots e_{h_r}\cdot e_i,\\
\alpha_B{e}_B\bar{e}_j=&\alpha_{B} e_{h_1}\cdots e_i\cdots e_j\cdots e_{h_{r+2}}\cdot \bar{e}_j\\
=&\alpha_{B} (-1)^{r+2-h(j)}\cdot e_{h_1}\cdots e_i\cdots e_{h_{r+2}}\cdot e_j \bar{e}_j\\
=&\alpha_{B} (-1)^{r+2-h(j)+r+2-h(i)-1}\cdot e_{h_1}\cdots e_{h_{r+2}}\cdot e_i \\
=&\alpha_{B} (-1)^{1-h(j)-h(i)}\cdot e_{h_1}\cdots e_{h_{r+2}}\cdot e_i \\
\end{split}\nonumber
\end{equation}
Then
\begin{equation}\label{}
\begin{split}
\alpha_A\bar{e}_Ae_i\alpha_B{e}_B\bar{e}_j=&\alpha_{A}(-1)^{\frac{r(r+1)}{2}}\cdot e_{h_1}\cdots e_{h_r}\cdot e_i\alpha_{B} (-1)^{1-h(j)-h(i)}\cdot e_{h_1}\cdots e_{h_{r+2}}\cdot e_i \\
=&\alpha_{A}\alpha_{B} (-1)^{\frac{r(r+1)}{2}+1-h(j)-h(i)}\cdot e_{h_1}\cdots e_{h_r}\cdot e_i\cdot e_{h_1}\cdots e_{h_{r+2}}\cdot e_i \\
=&\alpha_{A}\alpha_{B} (-1)^{\frac{r(r+1)}{2}+1-h(j)-h(i)+\frac{(r+1)(r+2)}{2}}\cdot \overline{e_{h_1}\cdots e_{h_r}\cdot e_i}\cdot e_{h_1}\cdots e_{h_{r+2}}\cdot e_i \\
=&\alpha_{A}\alpha_{B} (-1)^{r^2-h(j)-h(i)}.
\end{split}\nonumber
\end{equation}

Combining cases c1-c4, we have
\begin{equation}\label{}
\begin{split}
I_6=&\langle\tau_{e_0},\sum\limits^n_{j\neq i}(\bar{\alpha}e_j\alpha \bar{e}_i)\frac{\partial^2 \varphi}{\partial x_j\partial x_i}\rangle\\
   =&\langle\tau_{e_0},\sum\limits^n_{j\neq i}\big((\sum_{A}\bar{e_A}\alpha_A)e_j(\sum_{B}{e_B}\alpha_B) \bar{e}_i\big)\frac{\partial^2 \varphi}{\partial x_j\partial x_i}\rangle\\
   =&\langle\tau_{e_0},\sum\limits^n_{j\neq i}\big((\sum_{A}\bar{e_A}\alpha_A)e_i(\sum_{B}{e_B}\alpha_B) \bar{e}_j\big)\frac{\partial^2 \varphi}{\partial x_i\partial x_j}\rangle\\
   =&\sum\limits^n_{j\neq i}\langle\tau_{e_0},(\sum_{A}\bar{e_A}\alpha_A)e_i(\sum_{B}{e_B}\alpha_B) \bar{e}_j\rangle\frac{\partial^2 \varphi}{\partial x_i\partial x_j}\\
   =&\sum\limits^n_{j\neq i}\langle\tau_{e_0},(\sum_{A}\bar{e_A}\alpha_A)e_i(\sum_{B}{e_B}\alpha_B) \bar{e}_j\rangle\frac{\partial^2 \varphi}{\partial x_i\partial x_j}\\
   =&2^n\sum\limits^n_{j\neq i}\Big(\sum_{i\in A,~j\not\in A;A-{i}=B-{j}}\alpha_{A}\alpha_{B}(-1)^{r^2+1-p(i)-p(j)}\\
   &+\sum_{i\not\in A,~j\in A;A+{i}=B+{j}}\alpha_{A}\alpha_{B}(-1)^{r^2+1}\\
   &+\sum_{i\in A,~j\in A;A-{i}=B+{j}}\alpha_{A}\alpha_{B} (-1)^{r^2-h(j)-h(i)}\\
   &+\sum_{i\not\in A,~j\not\in A;A+{i}=B-{j}}\alpha_{A}\alpha_{B} (-1)^{r^2-h(j)-h(i)}\Big)\frac{\partial^2 \varphi}{\partial x_i\partial x_j}.
\end{split}\nonumber
\end{equation}

In all,
\begin{equation}
\begin{split}
I_3=&\int_\Omega I_4e^{-\varphi}dx\\
=&\int_\Omega (I_5+I_6+I_7)e^{-\varphi}dx\\
=&-2^{n+1}\int_\Omega \sum\limits_{i=1}^{n}(\sum\limits_{i\not\in A,|A|^2 ~\mbox{is odd}}\alpha^2_A+\sum\limits_{i\in A,|A|^2 ~\mbox{is even}}\alpha^2_A)\frac{\partial^2 \varphi}{\partial x^2_i}e^{-\varphi}dx\\
&+2^n\int_\Omega \sum\limits^n_{j\neq i}\Big(\sum_{i\in A,~j\not\in A;A-{i}=B-{j}}\alpha_{A}\alpha_{B}(-1)^{r^2+1-p(i)-p(j)}\\
   &+\sum_{i\not\in A,~j\in A;A+{i}=B+{j}}\alpha_{A}\alpha_{B}(-1)^{r^2+1}\\
   &+\sum_{i\in A,~j\in A;A-{i}=B+{j}}\alpha_{A}\alpha_{B} (-1)^{r^2-h(j)-h(i)}\\
   &+\sum_{i\not\in A,~j\not\in A;A+{i}=B-{j}}\alpha_{A}\alpha_{B} (-1)^{r^2-h(j)-h(i)}\Big)\frac{\partial^2 \varphi}{\partial x_i\partial x_j} e^{-\varphi}dx.
\end{split}\nonumber
\end{equation}
Then  
\begin{equation}\label{38}
\begin{split}
\|\overline{D}^*_\varphi\alpha\|^2=\|\overline{D}\alpha\|^2+\int_\Omega|\alpha|^2_0\Delta\varphi e^{-\varphi}dx
+I_3.
\end{split}
\end{equation}
If $\frac{\partial^2 \varphi}{\partial x_j\partial x_i}=0,~i\neq j,~1\leq i,j\leq n$ and $\frac{\partial^2 \varphi}{\partial x^2_i}\leq 0,~1\leq i\leq n$,  we have $I_3\geq 0$, and
$$\|\overline{D}^*_\varphi\alpha\|^2\geq \int_\Omega|\alpha|^2_0\Delta\varphi e^{-\varphi}dx.$$
With the above analysis, we can prove Theorem \ref{thm2} easily.
\begin{proof}
It is sufficient to prove the theorem if condition (\ref{eq:5}) in Theorem \ref{thm1} is presented. By Cauchy-Schwarz inequality in Proposition \ref{prop1}, we have for any $\alpha\in C^\infty_0(\Omega,\A)$ that
\begin{equation}
\begin{split}
|({f},\alpha)_\varphi|^2_0=&\big|\int_\Omega\bar{f}\cdot\alpha e^{-\varphi}dx\big|^2_0\\
=&~\big|\int_\Omega\bar{f}\cdot
\frac{1}{\sqrt{\Delta\varphi}}\cdot\alpha\cdot\sqrt{\Delta\varphi}\cdot e^{-\varphi}dx\big|^2_0\\
\leq&~\big\|\bar{f}\frac{1}{\sqrt{\Delta\varphi}}\big\|^2\cdot\big\|\alpha\cdot\sqrt{\Delta\varphi}\big\|^2\\
=&~\int_\Omega\big|\frac{\bar{f}}{\sqrt{\Delta\varphi}}\big|^2_0e^{-\varphi}dx\cdot
\int_\Omega\big|\alpha\cdot\sqrt{\Delta\varphi}\big|^2_0e^{-\varphi}dx\\
\leq & c\|\overline{D}^*_\varphi\alpha\|^2.
\end{split}\nonumber
\end{equation}
The proof is completed with Theorem \ref{thm1}.
\end{proof}

It should be noticed that when $n=1$, $I_3=0$. Then it comes from equation (\ref{38}) that the H\"ormander's $L^2$ theorem in $\R^{2}$ could be described which equals the classical H\"ormander's $L^2$ theorem in $\mathbb{C}$.
\begin{cor}\label{thm3}
Given $\varphi\in C^2(\Omega,\mathbb{R})$ with $\Omega$ being an open subset of $\R^{2}$; $\Delta\varphi\geq0$. Then for all $ f\in L^2(\Omega,\A,\varphi)$ with $\int_\Omega\frac{|f|^2_0}{\Delta\varphi}e^{-\varphi}dx=c<\infty$, there exists a $u\in L^2(\Omega,\A,\varphi)$ such that $$\overline{D}u=f$$ with
$$\|u\|^2\leq\int_\Omega\frac{|f|^2_0}{\Delta\varphi}e^{-\varphi}dx.$$
\end{cor}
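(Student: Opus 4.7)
The plan is to reduce Corollary \ref{thm3} to Theorem \ref{thm1} with the constant $c=\int_\Omega|f|^2_0/\Delta\varphi\,e^{-\varphi}\,dx$, paralleling the proof of Theorem \ref{thm2} but exploiting the simplifications available in dimension two.

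First, I would invoke the identity
\begin{equation*}
\|\overline{D}^*_\varphi\alpha\|^2 = \|\overline{D}\alpha\|^2 + \int_\Omega|\alpha|^2_0\,\Delta\varphi\,e^{-\varphi}\,dx + I_3
\end{equation*}
derived earlier in this section, and verify that $I_3$ vanishes identically when $n=1$: the contribution $I_7$ was already shown to vanish unconditionally; the piece $I_6$ is indexed by pairs $i\ne j$ with $1\le i,j\le 1$ and is therefore empty; and the single diagonal summand surviving in $I_5$ drops out by a direct inspection of its explicit formula. Combined with $\Delta\varphi\ge 0$ and $\|\overline{D}\alpha\|^2\ge 0$, this yields the key lower bound $\|\overline{D}^*_\varphi\alpha\|^2\ge\int_\Omega|\alpha|^2_0\Delta\varphi\,e^{-\varphi}\,dx$ for every $\alpha\in C^\infty_0(\Omega,\A)$, with no hypotheses on second derivatives of $\varphi$ beyond $\Delta\varphi\ge 0$. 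Note that this is why the mixed-partial assumptions required in Theorem \ref{thm2} can be dropped here.

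Next, I would apply Cauchy-Schwarz (Proposition \ref{prop1}) to $(f,\alpha)_\varphi$ after the splitting $f=\frac{f}{\sqrt{\Delta\varphi}}\cdot\sqrt{\Delta\varphi}$ to obtain $|(f,\alpha)_\varphi|^2_0\le c\,\|\overline{D}^*_\varphi\alpha\|^2$. This is precisely hypothesis (\ref{eq:5}), so Theorem \ref{thm1} supplies a weak solution $u\in L^2(\Omega,\A,\varphi)$ of $\overline{D}u=f$.

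The main obstacle I anticipate is sharpening the norm estimate. Theorem \ref{thm1} delivers $\|u\|^2\le 2^{2n}c = 4c$, whereas the corollary claims the cleaner bound $\|u\|^2\le c$. Per the remark following Theorem \ref{thm1}, the factor $2^{2n}$ should disappear when $n=1$, but making this rigorous requires reopening the final algebraic step in the proof of Theorem \ref{thm1}: the gap $2^{-n}$ in the inequality $|(u,u)_\varphi|^2_0\ge 2^{-n}\|u\|^4$ arises from off-diagonal Clifford cross terms, and I would argue that under the identification $\A\cong\mathbb{C}$ (with $e_1\leftrightarrow i$) these cross terms collapse and the chain of inequalities becomes tight, recovering the classical H\"ormander bound $\|u\|^2\le c$ exactly as in the complex one-variable case. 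This careful bookkeeping, rather than any new analytic input, is the only nontrivial step.
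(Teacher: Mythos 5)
Your reduction is the same one the paper intends: observe that $I_3=0$ when $n=1$ (your accounting is right --- $I_7$ vanishes unconditionally, $I_6$ is an empty sum over $i\neq j$ with $1\le i,j\le 1$, and for $n=1$ both index sets in the closed form of $I_5$ are empty), so (\ref{38}) gives $\|\overline{D}^*_\varphi\alpha\|^2\ge\int_\Omega|\alpha|^2_0\Delta\varphi\,e^{-\varphi}dx$ for all $\alpha\in C^\infty_0(\Omega,\A)$; then the Cauchy--Schwarz splitting $f=\frac{f}{\sqrt{\Delta\varphi}}\cdot\sqrt{\Delta\varphi}$ yields hypothesis (\ref{eq:5}) with $c=\int_\Omega|f|^2_0(\Delta\varphi)^{-1}e^{-\varphi}dx$, and Theorem \ref{thm1} produces the weak solution $u$. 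Up to this point the argument is correct and identical in structure to the proof of Theorem \ref{thm2}.

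The gap is exactly where you anticipated it, but your proposed repair does not close it. You attribute the loss $2^{2n}$ to the off-diagonal cross terms in the inequality $|(u,u)_\varphi|^2_0\ge 2^{-n}\|u\|^4$ and argue that for $n=1$ these collapse, making the chain tight. The cross terms do collapse when $n=1$ (indeed $\bar{u}u=u_0^2+u_1^2$ is scalar), but then that inequality becomes an exact \emph{equality}, and the remaining chain still reads $\|u\|^2=2^{n/2}|(u,u)_\varphi|_0\le 2^{n/2}\sqrt{c^*}\,\|u\|$ with $c^*=2^nc$ from the Hahn--Banach step, which gives $\|u\|^2\le 2^{2n}c=4c$, not $c$. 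The factor $2^{2n}$ originates in the normalization $(\lambda,\mu)_0=2^n[\lambda\bar{\mu}]_0$ --- which forces $|e_0|_0=2^{n/2}$, hence $c^*=2^nc$, and also the mismatch $|(u,u)_\varphi|_0=2^{-n/2}\|u\|^2$ --- and neither of these is affected by the vanishing of cross terms. To obtain the stated bound $\|u\|^2\le c$ one needs a constant-preserving duality argument (for instance, carrying out the classical one-variable H\"ormander argument under the identification $\A\cong\mathbb{C}$ with the $2^n$ normalizations tracked explicitly, or replacing the module Hahn--Banach extension by orthogonal projection onto the closure of $E$), which is genuinely different from reopening the last lines of the proof of Theorem \ref{thm1}. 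Be aware that the paper does not supply this step either: it only asserts, in the remark after Theorem \ref{thm1}, that the factor disappears when $n=1$. What your argument rigorously establishes is existence together with $\|u\|^2\le 4\int_\Omega|f|^2_0(\Delta\varphi)^{-1}e^{-\varphi}dx$.
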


\section{Conclusion}
 In this paper, based on the H\"ormander's $L^2$ theorem in complex analysis, the H\"ormander's $L^2$ theorem for Dirac operator in $\R^{n+1}$  has been obtained by Clifford algebra. When $n=1$, the result is equivalent to the classical H\"ormander's $L^2$ theorem in complex variable. Moreover, for any $f$ in $L^2$ space over a bounded domain with value in Clifford algebra, there is a weak solution of Dirac operator with the solution in the $L^2$ space as well. The potential applications of the results will be studied in our future work.

\begin{acknowledgements}
This work was supported by the National Natural Science Foundations of China (No. 11171255, 11101373) and Doctoral Program Foundation of the Ministry of Education of China (No. 20090072110053).
\end{acknowledgements}

%\begin{acknowledgements}
%If you'd like to thank anyone, place your comments here
%and remove the percent signs.
%\end{acknowledgements}

% BibTeX users please use one of
%\bibliographystyle{spbasic}      % basic style, author-year citations
%\bibliographystyle{spmpsci}      % mathematics and physical sciences
%\bibliographystyle{spphys}       % APS-like style for physics
%\bibliography{biblio_journal}   % name your BibTeX data base

% Non-BibTeX users please use
%\begin{thebibliography}{}

%
% and use \bibitem to create references. Consult the Instructions
% for authors for reference list style.
%
%\bibitem{RefJ}
% Format for Journal Reference
%Author, Article title, Journal, Volume, page numbers (year)
% Format for books
%\bibitem{RefB}
%Author, Book title, page numbers. Publisher, place (year)
% etc
%\end{thebibliography}

\end{document}